\theoremstyle{definition}
\theoremstyle{plain}
\newtheorem{theorem}{Theorem}[section]
\newtheorem{lemma}{Lemma}[section]
\newtheorem{cor}{Corollary}[section]
\theoremstyle{remark}
\newcommand{\R}{\mathbb{R}}
\newcommand{\C}{\mathbb{C}}
\newcommand{\F}{\mathbb{F}}
\title{On Incidence bounds with M\"obius hyperbolae in positive characteristic}
\author{Misha Rudnev, James Wheeler}
\newcommand{\bpm}{\begin{pmatrix}}
	\newcommand{\epm}{\end{pmatrix}}
\begin{document}

	\begin{abstract}
		We prove new incidence bounds between a plane point set, which is a Cartesian product, and a set of translates $H$ of the hyperbola $xy=\lambda\neq 0$, over a field of asymptotically large positive characteristic $p$. They improve recent bounds by Shkredov, which are based on using explicit incidence estimates in the early terminated procedure of repeated applications of the Cauchy-Schwarz inequality, underlying many qualitative results related to  growth and expansion in groups. 
		The  improvement -- both quantitative, plus we are able to deal with a general $H$, rather than a Cartesian product -- is mostly due to a non-trivial ``intermediate'' bound on the number of $k$-rich M\"obius hyperbolae in positive characteristic. In addition,  we make an observation that a certain energy-type quantity in the context of $H$ can be bounded via the $L^2$-moment of the Minkowski distance in $H$ and can therefore  fetch the corresponding estimates apropos of the Erd\H os distinct distance problem. 
	\end{abstract}

	\maketitle

	\section{Introduction}
	
	Let $\F$ be a field of characteristic $p$. We are  interested in positive and large (in particular odd) $p$, and the particular case $\F=\F_p$. We will also briefly address the case  $\F=\mathbb R.$
	
	Let $A\subset \F$ be a finite point set with cardinality $|A|$. We identify $A$ with its characteristic function $\mathbbm{1}_A(x)$. Consider the set $H$ of translates of the hyperbola $xy=-1$, in the form
	$$
	y  = a + \frac{1}{b-x}\,:\;(a,b)=h\in H\,.
	$$
	We identify $H$ with the set of  M\"obius transformations 
	\begin{equation}\label{e:def} h(x) = a + \frac{1}{b-x}\,.\end{equation}
	and define 
	$$
	\sigma(A,H):= \sum_{h\in H} \sum_{x\in A} \mathbbm{1}_A(h(x))
	$$
	as the number of incidences between  points in  $A\times A\subset \F^2$ and  hyperbolae in $H$. Our analysis extends trivially to the case when $-1$ in $xy=-1$ be replaced by any other nonzero $\lambda\in \F$.
	
	Clearly, no hyperbola in $H$ can support more than $|A|$ points. In addition, as a trivial example  one can take the hyperbola $xy=-1$, choose an arithmetic progression $A_1$ of the values of $x$, set $A_2=1/A_1$ and $A=A_1\cup A_2$. Translating the hyperbola horizontally by elements of $A_1$ yields other hyperbolae supporting $\gg |A|$ points of $A\times A$. Thus for $|H|\ll |A|$ one cannot do better than the trivial estimate $\sigma(A,H)\gg |A||H|$. But this construction is trivial, for the horizontal translates of the hyperbola do not intersect.
	
	When $|H|$ is essentially bigger than $|A|$ (by which we mean $|H|>|A|^{1+\epsilon}$ for some $\epsilon>0$), one can expect a better than trivial bound for $\sigma(A,H)$. If $\F$ is the field of real (complex) numbers, then one has the Szemer\'edi-Trotter type bound 
\begin{equation}
    \label{e:stb}
	\sigma(A,H) \ll |A|^2+ |H| + (|A|^2|H|)^{2/3}\,,
\end{equation}
	which is nontrivial for $|A|\ll |H|\ll|A|^4$\,. The first instance of a similar bound for a set of points and a set of unit circles was shown by Spencer, Szemer\'edi and Trotter \cite{SST}, who pointed out that the proof of the Szemer\'edi-Trotter point-line incidence theorem allows for replacing affine lines with curves that can be viewed as ``pseudolines'' that would include, in particular, translates of a circle or a hyperbola.
	
	\smallskip
	Above and throughout, we use the Vinogradov symbols $X \gg Y$ to indicate that there is an absolute constant $C>0$ such that $X \geqslant CY$ and similarly define $Y \ll X$. 
	The notations $\lesssim, \gtrsim$ hide, on top of this, powers of $\log(|A||H|)$. 
	
	We will also use the standard sum/product set and representation function notations, such as 
	$$
		r_{AB}(x) :=\left|\left\{(a,b)\in A\times B:ab=x\right\}\right| $$
	and use the word ``energy'' for $L^2$-moment type quantities, such as the standard additive energy over $\F$
	$$E_+(A,B)=\sum_x r^2_{A-B}(x).$$
	Our notations will pertain to non-commutative multiplication in the M\"obius group.
	
	\smallskip
	
	 In positive characteristic, the theorem of Stevens and de Zeeuw \cite{SSFZ} gives a reasonably strong  Szemer\'edi-Trotter type bound for the number of incidences between a Cartesian product point set and a set of affine lines in $\F^2$. The theorem has been responsible for much recent progress in sum-product estimates in positive characteristic. However, unlike the Euclidean Szemer\'edi-Trotter theorem, its proof does not readily enable one to replace straight lines with curves, say translates of the unit circle or hyperbola $xy=-1$.

The last generation of incidence bounds in positive characteristic (that would apply to a wide range of set cardinalities, sufficiently small in terms of $p$) 
has been based on blending a renowned algebraic theorem by Guth and Katz \cite[Theorem 4.1]{GK15} on pairwise line intersections in three dimensions with classical concepts from line geometry in the projective three-space, see e.g. \cite{Ru18}, \cite{Zahl21}, \cite{Ru20}. Heuristically, these incidence bounds can directly embrace only so much nonlinearity as one can feed into the Guth-Katz theorem in question. 

The existence of a {\em qualitatively} nontrivial bound for the quantity $\sigma(A,H)$ has been established in much generality by Bourgain \cite{Bour}, see the forthcoming Theorem \ref{bourg}. However, Bourgain's theorem does not enable one, realistically, to estimate its saving  to the trivial bound $\sigma(A,H)\leqslant |A||H|.$

In contrast, Shkredov in his recent paper \cite{Sh20} has shown that incidence theorems from  \cite{Ru18} and \cite{SSFZ} can be fetched  to yield {\em quantitative} bounds for $\sigma(A,H)$,  given that the set of translates $H$ is also a Cartesian product, see the forthcoming Theorem \ref{th:sh}. An opportunity to do so arose after Shkredov composed the translates of $xy=-1$ with each other three times: compositions of hyperbolae as M\"obius transformations come about naturally as one applies the Cauchy-Schwarz inequality to the quantity $\sigma(A,H)$.

Repeated applications of  Cauchy-Schwarz, yielding some non-trivial saving on every step, owing to the Bourgain-Gamburd $L^2$-smoothing lemma \cite{BG} constitute the basis for the proof of Bourgain's Theorem \ref{bourg}.

This paper strengthens Shkredov's incidence bounds in Theorem \ref{th:sh} and removes the assumption that $H$ be a Cartesian  product. Strengthening comes mostly by proving an ``intermediate'' incidence bound for $A\times A$ and any set of M\"obius hyperbolae, which is then fed into the first two applications of the Cauchy-Schwarz inequality to the quantity $\sigma(A,H)$ in the iterative procedure. Removing the assumption that $H$ be a Cartesian  product is partially due to recognising the appearance of the  $L^2$-moment of the Minkowski distance in the set $H$, arising as the result of Shkredov's triple composition.

Just like \cite{Bour} and \cite{Sh20}, our paper relies crucially on the group structure of the set of linear-fractional transformations. It is inherent therefore in this viewpoint that the plane point set, forming incidences with hyperbolae be a Cartesian product. This, unfortunately, restricts the applicability of our results even to, say  turning the hyperbolae by $45^{\circ}$, which would be interesting apropos of the unit distance count.

	
We proceed by formulating the aforementioned results by Bourgain and Shkredov, followed by the main theorem in this paper.

\begin{theorem}\cite[Bourgain]{Bour}\label{bourg}
		For all $\varepsilon>0$, there is $\delta>0$, as follows. Let  $A\subset\F_p$, $H\subset SL_2(p)$ satisfy the conditions: $1\ll |A|<p^{1-\varepsilon}$, $|H|>|A|^{1+\varepsilon}$, and $|H\cap gS|<|H|^{1-\varepsilon}$ for any proper subgroup $S\subset SL_2(p)$ and $g\in SL_2(p)$.
		For $g=\begin{pmatrix}a&b\\c & d \end{pmatrix}\in SL_2(p)$, let $h\in H$ be identified with the curve $cxy-ax+dy-b=0$.
		
		Then the number of incidences 
		$$\sigma(A,H) <|A|^{1-\delta}|H|.$$
	\end{theorem}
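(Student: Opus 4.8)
The plan is to recast $\sigma(A,H)$ as a single quadratic form (a matrix coefficient of the quasi-regular representation of $SL_2(p)$) and then extract a spectral gap from the Bourgain--Gamburd $L^2$-smoothing lemma. Identify each $h\in H$ with its element of $SL_2(p)$ acting on $\mathbb{P}^1(\F_p)$ by the M\"obius map $x\mapsto h(x)$, regard $A\subset\mathbb{P}^1(\F_p)$, and let $\mu$ be the uniform probability measure on $H$. Writing $\rho$ for the permutation representation of $SL_2(p)$ on $\ell^2(\mathbb{P}^1(\F_p))$ and $T_\mu=\rho(\mu)=\sum_g\mu(g)\rho(g)$ for the averaging operator, each per-hyperbola incidence count is the matrix coefficient $\langle\mathbbm{1}_A,\rho(h)\mathbbm{1}_A\rangle=\sum_x\mathbbm{1}_A(x)\mathbbm{1}_A(h(x))$, so that $\sigma(A,H)=|H|\,\langle\mathbbm{1}_A,T_\mu\mathbbm{1}_A\rangle$. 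Replacing $\mu$ by $\tfrac12(\mu+\check\mu)$ makes $T_\mu$ self-adjoint without changing this quadratic form on the real function $\mathbbm{1}_A$. The whole problem is thereby reduced to bounding $\langle\mathbbm{1}_A,T_\mu\mathbbm{1}_A\rangle$.

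First I would split off the trivial eigenvalue. The constant $\mathbf 1$ is fixed by $T_\mu$, so I decompose $\mathbbm{1}_A=\tfrac{|A|}{p+1}\mathbf 1+f_0$ with $f_0\in L^2_0:=\mathbf 1^{\perp}$ mean-zero. Since $T_\mu$ is self-adjoint it preserves $L^2_0$, and
\[
\langle\mathbbm{1}_A,T_\mu\mathbbm{1}_A\rangle=\frac{|A|^2}{p+1}+\langle f_0,T_\mu f_0\rangle,\qquad \|f_0\|_2^2\le|A|.
\]
The main term contributes $\asymp|H|\,|A|^2/p$, which is already of the desired shape: $|A|<p^{1-\varepsilon}$ gives $|A|^2/p<|A|^{1-\varepsilon'}$ with $\varepsilon'=\varepsilon/(1-\varepsilon)$. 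It remains to produce a genuine spectral gap $\|T_\mu\|_{L^2_0\to L^2_0}\le p^{-c}$ for some $c=c(\varepsilon)>0$; then $|\langle f_0,T_\mu f_0\rangle|\le p^{-c}|A|\le|A|^{1-\delta}$ and the theorem follows with $\delta=\min(\varepsilon',c)$.

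The spectral gap is where the group structure and the smoothing lemma enter. Being self-adjoint, $T_\mu$ satisfies $\|T_\mu\|_{L^2_0}^{2\ell}=\|T_\mu^{2\ell}\|_{L^2_0}=\|\rho(\mu^{(2\ell)})\|_{L^2_0}$, where $\mu^{(2\ell)}$ is the $2\ell$-fold convolution; this is the precise sense in which each Cauchy--Schwarz step composes the hyperbolae into longer words. Two inputs now combine. \emph{(i) Quasirandomness of $SL_2(p)$}: the restriction of $\rho$ to $L^2_0(\mathbb{P}^1(\F_p))$ is the Steinberg representation, of dimension $p$, and by the Landazuri--Seitz bound every nontrivial irreducible representation of $SL_2(p)$ has dimension $\gtrsim p$; Plancherel then gives, for any probability measure $\nu$, the estimate $\|\rho(\nu)\|_{L^2_0}\le(|SL_2(p)|/p)^{1/2}\|\nu\|_2\asymp p\,\|\nu\|_2$. \emph{(ii) $L^2$-flattening} \cite{BG}: under the non-concentration hypothesis $|H\cap gS|<|H|^{1-\varepsilon}$ there are $\ell=\ell(\varepsilon)$ and $\eta=\eta(\varepsilon)<1$ with $\|\mu^{(2\ell)}\|_2^2\le p^{-3+\eta}$. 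Combining, $\|\rho(\mu^{(2\ell)})\|_{L^2_0}\le p^{-1/2+\eta/2}$, whence $\|T_\mu\|_{L^2_0}\le p^{-(1-\eta)/(4\ell)}=:p^{-c}$, as required.

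The crux, and the step I expect to be the main obstacle, is the flattening statement (ii). The non-concentration hypothesis is assumed only for $H$ itself, whereas the iteration requires that none of the intermediate convolution powers $\mu^{(2^j)}$ concentrate on a coset of a proper (hence Borel, unipotent, or toral) subgroup of $SL_2(p)$. Propagating non-concentration up the scales is exactly the content of the product theorem in $SL_2(p)$ (Helfgott, Pyber--Szab\'o, Breuillard--Green--Tao): were some convolution power to fail to flatten, a Balog--Szemer\'edi--Gowers together with a product-theorem argument would force an approximate subgroup and hence concentration of $H$ on a coset, contradicting the hypothesis. Tracking the quantitative dependence of $\ell$ and $\eta$ on $\varepsilon$, and verifying that the range $|A|<p^{1-\varepsilon}$, $|H|>|A|^{1+\varepsilon}$ keeps $\mu$ above the $\ell^2$-floor $p^{-3}$ throughout the iteration (so that flattening has room to operate before saturating at the uniform measure), is the technical heart of the matter; the representation-theoretic and Cauchy--Schwarz steps above are then bookkeeping.
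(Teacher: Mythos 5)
First, a point of order: the paper does not prove Theorem \ref{bourg} --- it is quoted from \cite{Bour}, and the introduction only sketches Bourgain's strategy (finitely many Cauchy--Schwarz iterations, reducing to the energies $E(H)$, $T_k(H)$, which are bounded via the $L^2$-smoothing lemma, Helfgott's theorem and non-commutative Balog--Szemer\'edi--Gowers, with a \emph{trivial} bound used for the residual incidence count $\sigma(A,H')$ at the end). Your reduction to the averaging operator $T_\mu$ on $\ell^2(\mathbb{P}^1(\F_p))$, the splitting off of the constant function, and the quasirandomness inequality $\|\rho(\nu)\|_{L^2_0}\ll p\|\nu\|_2$ are all correct, and the identification of Cauchy--Schwarz iterations with convolution powers is faithful to the underlying machine.

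The genuine gap is step \emph{(ii)}: the claim that $\|\mu^{(2\ell)}\|_2^2\le p^{-3+\eta}$ for some $\ell=\ell(\varepsilon)$ bounded in terms of $\varepsilon$ alone. This is not a black-box application of the $L^2$-flattening lemma; for the sets permitted by the hypotheses it is false as a consequence of them, or at best a super-approximation statement far beyond what is cited. The theorem only assumes $1\ll|A|<p^{1-\varepsilon}$ and $|H|>|A|^{1+\varepsilon}$, so $|H|$ may be $p^{o(1)}$ or even of size independent of $p$. The non-concentration hypothesis, propagated to convolution powers via $\mu^{(k)}(gS)\le\max_{g'}\mu(g'S)<|H|^{-\varepsilon}$, caps the per-step flattening gain at a power of $|H|^{-\varepsilon}$ (one cannot rule out the ``structured'' alternative of the dichotomy at any finer scale), so reaching $p^{-3+\eta}$ requires on the order of $\log p/\log|H|$ iterations --- unbounded in this regime --- while the word length, hence the exponent in $\|T_\mu\|\le p^{-(1-\eta)/2^{\ell+1}}$, degrades exponentially in the number of iterations. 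The resulting ``gap'' then fails to beat $|A|^{-\delta}$ for any uniform $\delta(\varepsilon)$. Relatedly, the quasirandomness inequality only bites once the measure is spread over a set of size comparable to a fixed power of $|SL_2(p)|\sim p^3$, so the iteration cannot be stopped early. A telltale symptom is that your argument never uses the hypothesis $|H|>|A|^{1+\varepsilon}$: that hypothesis is precisely what lets Bourgain terminate after \emph{boundedly} many Cauchy--Schwarz steps, measure each energy saving $T_k(H)\ll|H|^{2k-1-\gamma}$ against $|A|$ rather than against $p$, and use the trivial bound for the final incidence term in place of equidistribution. Repairing your write-up would mean abandoning the spectral-gap endgame and reverting to exactly that early-terminated iteration --- which is the scheme the present paper describes and then refines in Theorems \ref{thm:T2} and \ref{thm:T3}.
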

	
	Note that in Theorem \ref{bourg}, $H$ is a general {\em three-parameter}, set of hyperbolae, rather than the two-parameter set of translates of $y=-1/x$ in the next two theorems. Theorem \ref{bourg} says that there is a nontrivial incidence estimate for $\sigma(A,H)$ in the $\F_p$-context, as long as $A$ is essentially smaller than $\F_p$ itself (that is $|A|<p^{1-\varepsilon}$) and the number of M\"obius hyperbolae is essentially greater than $|A|$. It involves an additional assumption that much of $H$ cannot lie in a coset of a  proper subgroup of $SL_2(p)$. This assumption can be weakened to the subgroup being abelian, owing to the recent energy bounds in the affine group by Petridis et al, \cite{Pet20}. 
	
	Shkredov has more recently proved what can be regarded a special case of a quantitative version\footnote{In Shkredov's original formulation the two Cartesian products involve four scalar sets, we quote only a symmetric variant.}  of Theorem \ref{bourg}.
	\begin{theorem}\cite[Shkredov, Theorem 16]{Sh20}
		\label{th:sh}
		Let $A\subseteq \F_p$  and $H=B\times B\subseteq \F_p^2$ be a set of translates of the hyperbola $xy=-1$. Then 
		\begin{equation}\label{e:sigma0}\begin{aligned} \left| \sigma(A,H) - \frac{|A|^2|H|}{p} \right| \;\lesssim \;& \min \left( |A|^{1/2} |H|+ |A|^{3/2}|H|^{3/4} \,,\right. \\
				\\ 
				& \qquad \left.  |A|^{3/4}|H|
				+ |A|^{5/4}|H|^{41/48}\right)\,.\end{aligned}
		\end{equation}
	\end{theorem}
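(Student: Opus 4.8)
The plan is to recast $\sigma(A,H)$ as an incidence count with a transparent bilinear structure, and then to beat the naive incidence estimate by inserting Cauchy--Schwarz steps that replace $H$ by its self-compositions inside the M\"obius group.

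I would begin with the observation that the condition for a point $(x,y)\in A\times A$ to lie on the hyperbola labelled by $(a,b)=h\in H=B\times B$ is bilinear: rewriting $h(x)=y$ as $(b-x)(y-a)=1$ and expanding gives $ax+by-xy=1+ab$. Lifting each point to $(x,y,xy)\in\F_p^3$ and each hyperbola to the plane $\{aX+bY-Z=1+ab\}$, the quantity $\sigma(A,H)$ becomes exactly a number of point-plane incidences $I(P,\Pi)$ with $|P|=|A|^2$ and $|\Pi|=|H|$. The expected value $|P||\Pi|/p=|A|^2|H|/p$ is precisely the main term subtracted in \eqref{e:sigma0}, so that its left-hand side is the genuine deviation. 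Feeding this configuration straight into Rudnev's point-plane theorem \cite{Ru18} gives an error of order $|A||H|$ (up to the collinearity term and the relevant size regime in $p$): already nontrivial, but short of \eqref{e:sigma0}. The extra gain must come from exploiting the group law before estimating.

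Accordingly, I would apply Cauchy--Schwarz to $\sigma(A,H)=\sum_{x\in A}|\{h\in H: h(x)\in A\}|$, squaring in the point variable $x$, to obtain $\sum_{x\in A}|\{(h_1,h_2): h_1(x),h_2(x)\in A\}|$. Writing $y_i=h_i(x)$ and eliminating $x=h_1^{-1}(y_1)$ converts $h_2(x)=y_2$ into $(h_2\circ h_1^{-1})(y_1)=y_2$, so the count is governed by the composed M\"obius maps $h_2\circ h_1^{-1}$, subject to the residual constraint $h_1^{-1}(y_1)\in A$. Iterating this (Shkredov's ``triple composition'') keeps every intermediate object inside the linear-fractional group and repeatedly re-exposes a Cartesian-product incidence problem of the same bilinear type. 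I would then bound the resulting rich-composition count in two ways, which is the origin of the minimum in \eqref{e:sigma0}: one branch returns the composed configuration to Rudnev's point-plane bound \cite{Ru18} and yields the term $|A|^{1/2}|H|+|A|^{3/2}|H|^{3/4}$; the other feeds it into the Cartesian-product line-incidence theorem of Stevens and de Zeeuw \cite{SSFZ}, whose $3/4$-power in the number of lines propagates, through the iterated Cauchy--Schwarz, into the exponent $41/48$ of the term $|A|^{3/4}|H|+|A|^{5/4}|H|^{41/48}$. Tracking the exponents is then routine once the correct incidence input is fixed at each stage.

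The step I expect to be the main obstacle is controlling the \emph{degenerate} contributions produced by composition. On passing to $h_2\circ h_1^{-1}$ one must isolate the diagonal-type pairs whose composition is nearly the identity, or more generally lies in a one-parameter (abelian) subgroup of the M\"obius group, since exactly these pairs create rich lines or concurrent pencils of planes and so violate the non-degeneracy hypotheses underlying both \cite{Ru18} and \cite{SSFZ}; this is also where the trivial horizontal-translate examples of the introduction get ruled out. A secondary difficulty is that each Cauchy--Schwarz step leaves an auxiliary membership constraint such as $h_1^{-1}(y_1)\in A$, which has to be dropped by positivity or absorbed without sacrificing the saving, all while keeping the set sizes below the thresholds in $p$ under which the two incidence theorems apply and the main term $|A|^2|H|/p$ is cleanly isolated.
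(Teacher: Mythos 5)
First, a point of orientation: the statement you were asked to prove is quoted verbatim from \cite{Sh20}; the present paper does not reprove it, but it describes Shkredov's mechanism in detail and runs the directly parallel (and stronger) argument in the proof of Theorem \ref{thm:T2}, so a meaningful comparison is possible.

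Your skeleton --- Cauchy--Schwarz in the point variable, passage to the compositions $h_2\circ h_1^{-1}$, dropping the residual membership constraint $h_1^{-1}(y_1)\in A$ by positivity, and iterating to reach $HH^{-1}HH^{-1}$ --- is correct and is exactly the skeleton of \eqref{e:cs1} and \eqref{inequality1'}. The genuine gap is in where the saving is supposed to come from. You propose to feed the \emph{composed incidence counts} $\sigma(A,HH^{-1})$, $\sigma(A,HH^{-1}HH^{-1})$ into the incidence theorems of \cite{Ru18} and \cite{SSFZ}. But, as the paper states explicitly, Bourgain and Shkredov use only the \emph{trivial} bound for $\sigma(A,H')$; the entire quantitative saving, and hence the exponents $3/4$ and $41/48$, comes from the other factor produced by the second Cauchy--Schwarz, namely the noncommutative energies $E(H)$ and $T_3(H)$ of $H$ inside the M\"obius group. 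For $H=B\times B$ these collapse to scalar additive/sum--product quantities of $B$, via $E(H)\leqslant |B|^2E_+(B)$ and $T_3(H)\leqslant |B|^2\sum_x r^2_{(B-B)(B-B)}(x)+|B|^8\lesssim |B|^7E_+(B)^{1/2}+|B|^8$ (Lemma \ref{lem:sh}), and it is in proving the latter sum--product estimate that \cite{Ru18} and \cite{SSFZ} are actually invoked. This is also the \emph{only} place where the hypothesis $H=B\times B$ enters; your proposal never isolates any such energy quantity, so the Cartesian structure does no work, and iterated Cauchy--Schwarz with trivial bounds on every factor returns only the trivial estimate $|A||H|$. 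Two smaller but real problems: your opening point--plane lifting yields an error term $|P|^{1/2}|\Pi|=|A||H|$, which \emph{is} the trivial bound rather than a nontrivial one, and the main term $|A|^2|H|/p$ is in fact isolated by a character-sum/positivity argument (compare the Iosevich et al.\ bound quoted before Theorem \ref{thm:T3}), not by \cite{Ru18}. Finally, the assertion that ``tracking the exponents is then routine'' cannot stand in for the proof: the exponents $3/4$ and $41/48$ are precisely the content of the theorem, and they are not recoverable from the inputs you have fixed.
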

	The first line in the right-hand side of \eqref{e:sigma0} (where the second term is viewed as the main one) provides a nontrivial estimate only for $|H|>|A|^2$, while the second line for $|H|\gtrsim|A|^{12/7}$. The logarithmic factor subsumed by the $\lesssim$ symbol  is present only in the second line of  \eqref{e:sigma0}.
	
	\smallskip
	Our main  result is the following improvement of Theorem \ref{th:sh}.
	\begin{theorem}
		\label{thm:T2}
		Let $A \subset  \F$, with   $|A|<\sqrt{p}$ if $\F$ has positive characteristic $p$.
		For a set of translates $H$ of the hyperbola $y=-1/x$, with $|H|>|A|$, suppose at most 
		$M$ translates $(a,b)\in H$ have the same abscissa or ordinate. Then with 
		$$
		M_1  = \left\{ \begin{array}{lll} M, &\mbox{if } |H|\leqslant |A|^{3/2}\,,\\
			|H|^{2/11}|A|^{8/11} & \mbox{otherwise} \end{array} \right.,$$
		one has the estimate
		\begin{equation}\label{e:sigma1} \sigma(A,H) \;\ll \;  |A|^{1/2} |H|+ |A|^{\frac{6}{5}}|H|^{\frac{4}{5}}
			M_1^{\frac{1}{10}}\,.\end{equation}
		Furthermore, with $$
		M_2  = \left\{ \begin{array}{lll} M, &\mbox{if } |H|\leqslant |A|^{4/3}\,,\\
			|H|^{3/22}|A|^{9/11} & \mbox{otherwise} \end{array} \right.,$$
		one has the estimate
		\begin{equation}\label{e:sigma2}\sigma(A,H) \;\ll \;   |A|^{3/4}|H| + |A|^{11/10}|H|^{17/20}( M_2^{1/10} + |H|^{1/15})
		\end{equation}
		where  $( M_2^{1/10} + |H|^{1/15})$ can be  replaced by $|H|^{1/16}$ if $H=B\times B$.
	\end{theorem}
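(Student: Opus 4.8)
The plan is to follow the iterative Cauchy--Schwarz procedure indicated in the introduction, but to replace the naive incidence input at its first one or two steps by a sharper bound on the number of $k$-rich M\"obius hyperbolae, and to recognise the residual contribution as the $L^2$-moment of the Minkowski distance in $H$. First I would record the group dictionary underlying everything: writing $h(x)=a+\frac1{b-x}=\frac{-ax+(ab+1)}{-x+b}$ identifies each translate $(a,b)=h\in H$ with the unimodular matrix $\left( \begin{smallmatrix} -a & ab+1 \\ -1 & b \end{smallmatrix} \right)$, so $H\subset SL_2(\F)$ and composition of hyperbolae is matrix multiplication. Dually, a point $(x,y)\in A\times A$ lies on $h$ iff $(y-a)(b-x)=1$, i.e. the incidence locus of a fixed $(x,y)$ is itself a translate of $xy=1$ in the $(a,b)$-plane; in particular two generic points of $A\times A$ share $O(1)$ common hyperbolae, the ``pseudoline'' property behind \eqref{e:stb}.

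I would then set up the iteration explicitly. Cauchy--Schwarz in $x$ gives $\sigma(A,H)^2\leqslant |A|\sum_{h,g\in H}\#\{x\in A:\ h(x),g(x)\in A\}$, and substituting $u=h(x)$ turns the inner count into one governed by the composition $gh^{-1}\in HH^{-1}$ together with two membership conditions. Thus each Cauchy--Schwarz step trades $H$ for a composed family, and iterating as in \cite{BG}, \cite{Sh20} reaches Shkredov's triple composition and the exponents of \eqref{e:sigma0}. The improvement is to estimate the composed-family incidences not trivially but through an ``intermediate'' bound on rich hyperbolae: writing $H_k=\{h\in H:\ |A\cap h(A)|\geqslant k\}$ and dyadically pigeonholing, one has $\sigma(A,H)\lesssim \max_k k\,|H_k|$ up to a logarithmic loss (removable by standard means) and up to the trivial terms $|A|^{1/2}|H|$, resp. $|A|^{3/4}|H|$, which absorb the ranges of $k$ controlled trivially. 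The two different main terms in \eqref{e:sigma1} versus \eqref{e:sigma2} then correspond to feeding the intermediate estimate into one, respectively two, of these steps.

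For the intermediate bound itself, which I expect to be the crux, I would start from $k^2|H_k|\leqslant \sum_{h\in H}|A\cap h(A)|^2$, expand the right-hand side as a count of configurations $(h;(x_1,y_1),(x_2,y_2))$ with $x_i,y_i\in A$ and $h(x_i)=y_i$, and linearise through the $SL_2$-action so that the problem becomes an incidence count between $A\times A$ and a family of planes. There I would invoke the Rudnev point-plane bound \cite{Ru18} in general, and the Stevens--de Zeeuw bound \cite{SSFZ} when a Cartesian structure is present. The hypothesis $|A|<\sqrt p$ is precisely what keeps these inputs below the saturation threshold $p$, so that no main term $|A|^2|H|/p$ survives, explaining its absence from \eqref{e:sigma1}, \eqref{e:sigma2} compared with \eqref{e:sigma0}. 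The genuinely delicate part is controlling the degenerate configurations, namely pairs of points whose shared hyperbola arises from coincidences of abscissae or ordinates among the translates; these are governed exactly by the number of elements of $H$ sharing an abscissa or ordinate, i.e. by the parameter $M$. Isolating their contribution is what produces the factors $M_1^{1/10}$ and $M_2^{1/10}$ and forces the two regimes in the definitions of $M_1,M_2$, the capping at $|H|^{2/11}|A|^{8/11}$, resp. $|H|^{3/22}|A|^{9/11}$, being the point where the generic and degenerate estimates balance.

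Finally I would assemble the pieces and optimise the dyadic sum over $k$. The extra term $|H|^{1/15}$ in \eqref{e:sigma2} is where the Minkowski distance enters: after the second Cauchy--Schwarz the leftover energy is exactly $\sum_t r^2(t)$ with $r(t)=\#\{(h,h')\in H^2:\ (a-a')(b-b')=t\}$, the $L^2$-moment of the Minkowski distance in $H$, which I would bound by the incidence count of $H$ with the translates of $xy=t$ (the Minkowski ``circles''), i.e. by an Erd\H os-distinct-distance-type estimate, and this is what lets one drop the Cartesian hypothesis on $H$. For a Cartesian $H=B\times B$ this moment factorises through the multiplicative energy of $B$ and admits the sharper bound giving $|H|^{1/16}$ in place of $M_2^{1/10}+|H|^{1/15}$; for general $H$ one loses slightly, which is the price of generality. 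Tracking the constants through the dyadic summation then yields \eqref{e:sigma1} and \eqref{e:sigma2}.
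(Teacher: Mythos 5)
Your overall architecture matches the paper's: iterated Cauchy--Schwarz producing composed families $HH^{-1}$ and $HH^{-1}HH^{-1}$, a non-trivial bound on $k$-rich M\"obius hyperbolae fed into the incidence term, and the Minkowski-distance $L^2$-moment controlling the residual energy for \eqref{e:sigma2}. However, there are two genuine gaps at the points where the real work happens. First, your plan for the intermediate bound --- expanding $k^2|H_k|\leqslant\sum_h|A\cap h(A)|^2$, ``linearising through the $SL_2$-action'' and invoking a point--plane incidence theorem --- is not what is needed and it is unclear it would produce the required estimate $m_k\ll|A|^7/k^5$ for $k>\sqrt{|A|}$. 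The paper's mechanism is different and essential: for each \emph{fixed} point $q=(a,a')\in A\times A$, the hyperbolae through $q$ correspond to M\"obius transformations sending $a\mapsto a'$, which form a coset of the Borel (upper-triangular) subgroup; conjugating by $\frac{1}{a'-z}$ and $a-\frac1z$ turns them into \emph{affine lines} meeting a new Cartesian product $B\times C$ with $|B|=|C|=|A|$. One then applies the $k$-rich \emph{line} bound $l_k\ll\min(|A|^5/k^4,\,p|A|^2/k^2,\,|A|^4/k^2)$ (Stevens--de Zeeuw plus the trivial Cauchy--Schwarz bound, the latter needed to certify the hypothesis $|A||L|<p^2$ once $k>\sqrt{|A|}$), sums over the $|A|^2$ points $q$, and divides by $k$ since each $k$-rich hyperbola is counted $\geqslant k$ times. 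Without this per-point localisation your sketch does not reach the exponent $5$ in $k$, which is exactly what makes \eqref{e:sigma1} beat Shkredov's first bound.

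Second, you misattribute the source of the parameter $M$. It does not arise from degenerate point pairs inside the incidence count (the intermediate bound contains no $M$ at all); it enters through the energy $E(H)\ll|H|^2M$, proved by a direct matrix computation showing that $h_1h_2^{-1}=h_1'(h_2')^{-1}$ forces either $a_1=a_1',\,a_2=a_2',\,b_1-b_2=b_1'-b_2'$ or the symmetric system in the $b$'s --- this is where coinciding abscissae/ordinates cost you. Correspondingly, the two regimes in $M_1,M_2$ do not come from balancing ``generic versus degenerate'' configurations but from a pruning step performed \emph{before} any Cauchy--Schwarz: one removes the translates lying on few very rich vertical/horizontal lines, bounds their contribution trivially by $|A||H|/x$, and optimises $x$ against the main term; this is what produces the caps $|H|^{2/11}|A|^{8/11}$ and $|H|^{3/22}|A|^{9/11}$. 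Finally, for \eqref{e:sigma2} the reduction of $T_3(H)$ to the rectangular-quadruple count is itself non-trivial: the identity $h_1h_2^{-1}h_3=h_1'h_2^{-1}h_3'$ must be shown to force $(a_1-a_1')(b_1-b_1')=(a_3-a_3')(b_3-b_3')$ with $h_2$ determined up to $O(1)$ choices, and the contribution of products landing in the Borel subgroup has to be split off separately (it accounts for the $|H|^4$ term); your sketch treats this as immediate.
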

	Estimate \eqref{e:sigma1} is nontrivial for $H>|A|^{3/2}$, as well as for $|H|>|A|^{4/3}$, assuming $M\leqslant |H|^{1/2}$ as is the case in Theorem \ref{th:sh}. Estimate  \eqref{e:sigma2} generalises and improves  the second line of Shkredov's estimate \eqref{e:sigma0}. It adds to \eqref{e:sigma1} by yielding a better bound for rich hyperbolae. Hence, it is always nontrivial for $|H|>|A|^{4/3}$, as well as for $|H|>|A|^{6/5}$, assuming  $M\leqslant |H|^{2/3}$. 
	
	Other than that, for a few applications we show, we will only use the bound \eqref{e:sigma1}, which is stronger in the most interesting regime $|H|\sim |A|^2$. However, the proof of bound \eqref{e:sigma2} generalising the second line of Shkredov's bound \eqref{e:sigma0} reveals an interesting connection with the Erd\H os distinct distance problem, apropos of the Minkowski distance, naturally associated with hyperbolae. 
	
	\smallskip
	We conclude the introduction by discussing the structure of the proofs.

	The proof of Bourgain's Theorem \ref{bourg} is based on a series of repeated applications of the Cauchy-Schwarz inequality, after each one of which replaces its input $H$, viewed as a set of $SL_2$-transformations, by $H'= H\circ H^{-1}$ (further we just write $HH^{-1}$ and do not make a distinction between $SL_2$ and $PSL_2$). After each Cauchy-Schwarz application, one splits (also by Cauchy-Schwarz) the count into estimating the 
	energy 
	\begin{equation} \label{e:energy} E(H):=\left|\left\{(h_1,h_2,h'_1,h'_2)\in H^4: h_1h_2^{-1}=h'_1{h'_2}^{-1}\right\}\right|\,.\end{equation}
	of the set of M\"obius transformations and, separately, the quantity $\sigma(H',A)$, for which both Bourgain and Shkredov just use a trivial bound. It is estimating the quantity $E(H)$ and its further iterates
	$$T_{k}(H):=\left|\left\{(h_1,\dots h_k,h'_1, \dots h'_k)\in H^{2k}:h_1h_2^{-1}h_3\dots=h'_1{h'}_2^{-1}h'_3\dots\right\}\right|$$
	that underlies nontrivial savings. 
	In Bourgain's proof it comes from repeatedly applying the $L^2$-smoothing lemma of Bourgain and Gamburd \cite{BG}, accumulating very small savings at each step. (For a proof see also \cite[Appendix]{Sh18}.) Each such saving is due to combining  Helfgott's theorem on growth and expansion in $SL_2(p)$ \cite{HH1} with the (non-commutative) Balog-Szemer\'edi-Gowers theorem in $SL_2(p)$. Taking sufficiently many iterations leads to Bourgain's claim.  Getting a quantitative lower bound on the saving $\delta$ in Theorem \ref{bourg} seems forbidding.

	In the proof of Shkredov's Theorem \ref{th:sh}, two applications of Cauchy-Schwarz suffice to yield a quantitative bound for $\sigma(A,H)$, but technically this seems only feasible so far when $H$ is a two-parameter family. (See also \cite{Pet20} and \cite{MW20} for other non-commutative energy estimates.) The main observation is that on the second step of the iteration procedure beginning with the set $H=B\times B$, apropos of  $H'= HH^{-1}HH^{-1}$, one has a much stronger explicit sum-product type $L^2$-estimate quantity  (see the forthcoming Lemma \ref{lem:sh}). Shkredov still used  just a trivial estimate for $\sigma(H',A)$. Fetching these explicit $L^2$ estimates seems to be contingent on the set of translates $H$ being two-dimensional, as we currently -- and regrettably -- do not know a  quantitative estimate for $E(H)$, where $H$ is a (sufficiently small relative to $p$) general set of $SL_2(p)$ transformations. Having such an estimate would directly imply a variant of Helfgott's theorem on growth and expansion in $SL_2(p)$, with much of the machinery that its proof uses made redundant.

	The proof of our Theorem \ref{thm:T2} follows Shkredov, with two main innovations. One is a new ``intermediate'' incidence bound in Theorem \ref{thm:incidence}, which despite being a rather crude corollary of the Stevens - de Zeeuw incidence bound for lines and points, works efficiently even after one application of Cauchy-Schwarz in the iterative procedure, resulting in estimate \eqref{e:sigma1}, which is much better than the first term in the right-hand side of estimate \eqref{e:sigma0}. Furthermore, after one more application of Cauchy-Schwarz we note the connection of the quantity $T_3(H)$ with the $L^2$-moment of the Minkowski distance in the set $H$, see Lemma \ref{lem:T3}. Shkredov's sum-product type bound, see Lemma \ref{lem:sh} is a particular case of this bound when $H=B\times B$ (in which case the estimate is slightly stronger). A sharp Euclidean bound for the quantity in question was central for resolution, by Guth-Katz \cite{GK15} of the Erd\H os distinct distance problem \cite{erdosdistnpoints}; it was adapted to the Minkowski (alias pseudoeuclidean) distance in \cite{RNR}.
	
	We note that in contrast with Bourgain's proof, iterating further would not create new savings for us, since we do not know a way of getting stronger quantitative estimates for $T_n(H)$ with $n>3$, except fetching the $L^2$-smoothing lemma. Moreover, it is inherent in the iteration procedure that the efficiency of using a nontrivial incidence bound for $\sigma(A,H')$ decreases with the number of iterates, as well as that on the $k$th step of the iteration one can only get a nontrivial bound on the number of hyperbolae in $H$, which are $|A|^{1-2^{-k}}$-rich. This is evinced by the very first terms in the right-hand side of estimates \eqref{e:sigma1}, \eqref{e:sigma2}.

	\smallskip

	Finally, we present an adaptation of Theorem \ref{thm:T2} to the case $\F=\F_p$, providing additional estimates, covering the case $|A| \geqslant \sqrt{p}$.
	As the sets $A,H$ get bigger, one should bear in mind the character sum estimate by Iosevich et al. \cite{IHS07}, which is best possible for $|A|^2|H|>p^3$, yet trivial when $|H|<p$:
	$$\sigma(A,H)\leqslant \frac{|A|^2|H|}{p} + 2|A|\sqrt{p|H|}\,.
	$$
	
	We also address in passing the case $\F=\mathbb \R$, as it merely requires a recalculation, using the the sate of the art incidence theorem for modular hyperbolae due to Sharir and Solomon \cite{ShSo}. The reason for doing this is that under the assumption that the number of translates $(a,b)\in H$ with the same abscissa or ordinate is $O(|H|^{1/2})$, then for 
	$|A|\ll |H|\lesssim |A|^{19/13}$,  estimate \eqref{e:sigmaR1} is stronger than the Szemer\'edi-Trotter estimate 
	$\sigma(A,H)\ll |A|^{4/3}|H|^{2/3}$. Moreover, if $|H|\lesssim |A|^{16/13}$ this becomes the case without any assumptions on $H$. (Ideally, of course, one would like to be able to be able to beat the Szemer\'edi-Trotter estimate in the range $|H|\sim |A|^2$.)
	
	\begin{theorem}
		\label{thm:T3}
		Assume the notations of Theorem \ref{thm:T2}. 
		
		Let $\F=\F_p$. If ${\displaystyle  |A||H|^2\leqslant p^3,}$ one can remove the constraint $|A|<\sqrt{p}$ as to \eqref{e:sigma1} and have it with the extra term ${\displaystyle\frac{|A|^{5/4}|H|}{p^{1/4}}}$ in the right-hand side.
		
		Furthermore,\footnote{One can add more intermediate range estimates by fetching more cases from the forthcoming Lemma \ref{lem:T3bd} but they do not  appear to be sufficiently enlightening.} if ${\displaystyle |A||H|^4\leqslant p^5,}$
		one can remove the constraint $|A|<\sqrt{p}$ as to estimate \eqref{e:sigma2} and have it with the extra term ${\displaystyle \frac{|A|^{9/8}|H|}{p^{1/8}}}$ in the right-hand side.
		
		
		If $A\subset \mathbb R$, then
		
		\begin{equation}\label{e:sigmaR1}\sigma(A,H) \;\lesssim  \;  |A|^{1/2}|H| + |A|^{7/6} |H|^{2/3} M_1^{1/6}+ |A|^{\frac{23}{22}}|H|^{\frac{9}{11}}M_1^{\frac{1}{11}}\,.
		\end{equation}

	\end{theorem}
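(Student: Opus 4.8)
The plan is to reopen the proof of Theorem \ref{thm:T2} and locate precisely where the hypothesis $|A|<\sqrt p$ was used, since the rest of that argument is insensitive to the characteristic. In the $\F_p$ setting the threshold $\sqrt p$ enters only through the $\F_p$-incidence inputs: the Stevens--de Zeeuw bound feeding the intermediate estimate of Theorem \ref{thm:incidence}, and the energy/$T_3$ estimate of Lemma \ref{lem:T3}. Each of these carries a statistical main term of size $\sim\!(\text{cardinalities})/p$ next to its fluctuation term, and $|A|<\sqrt p$ is exactly the condition rendering that main term harmless. I would therefore restate these inputs keeping the $p$-dependence explicit (for the energy this is the content of Lemma \ref{lem:T3bd}, and for the intermediate bound the correction is inherited from the underlying point--plane estimate), and then rerun the two Cauchy--Schwarz passes verbatim while carrying the correction along.

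For the first assertion I would insert the $p$-corrected intermediate incidence bound into the single Cauchy--Schwarz step that yields \eqref{e:sigma1}. The correction contributes one additional summand to $\sigma(A,H)$; under the hypothesis $|A||H|^2\leqslant p^3$ — which keeps us in the regime where the underlying incidence estimate is sharp rather than trivial — a direct computation collapses this summand to $|A|^{5/4}|H|/p^{1/4}$, and one checks en route that it already dominates the raw statistical term $|A|^2|H|/p$ (the constraint forces $|A|<p$), so that the assumption $|A|<\sqrt p$ can be dropped outright. For the second assertion the $p$-dependence instead resides in $T_3(H)$, i.e.\ in the $L^2$-moment of the Minkowski distance; taking the $\F_p$ version of Lemma \ref{lem:T3bd} and pushing the correction through the second Cauchy--Schwarz pass produces, under $|A||H|^4\leqslant p^5$, the stated extra summand $|A|^{9/8}|H|/p^{1/8}$ in \eqref{e:sigma2}. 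The two constraints $|A||H|^2\leqslant p^3$ and $|A||H|^4\leqslant p^5$ are exactly the thresholds past which these $p$-corrections would overtake the genuine error terms, and beyond which the Iosevich et al.\ character-sum estimate quoted above is the efficient bound.

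For the real case the only substantive change is to replace the Stevens--de Zeeuw input by the sharp Sharir--Solomon incidence theorem for translates of a hyperbola. There being no $p$-correction over $\mathbb R$, the iteration is cleaner: the term $|A|^{1/2}|H|$ persists as the genuinely low-richness contribution, while $|A|^{7/6}|H|^{2/3}M_1^{1/6}$ and $|A|^{23/22}|H|^{9/11}M_1^{1/11}$ emerge respectively from the first Cauchy--Schwarz pass paired with the Sharir--Solomon bound and from a second pass feeding the real analogue of the $T_3$/Minkowski estimate. I would simply recompute the two optimisations over the richness level $M_1$ with the real exponents in place of the $\F_p$ ones.

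The only place demanding genuine care is the bookkeeping of the $p$-dependence across the \emph{two} Cauchy--Schwarz applications: each pass both multiplies cardinalities and moves the active set from $H$ to $HH^{-1}$ and then to its next iterate, so the correction must be tracked at the correct level of the iteration and re-optimised against the richness parameters $M_1,M_2$. Verifying that the clean single terms $|A|^{5/4}|H|/p^{1/4}$ and $|A|^{9/8}|H|/p^{1/8}$ — rather than an unwieldy minimum of several powers of $p$ — are exactly what survive under the stated constraints is the main computational check; the footnote's remark that further intermediate ranges exist but are unenlightening signals that away from these thresholds one merely recovers additional cases of Lemma \ref{lem:T3bd}.
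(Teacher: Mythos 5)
Your overall plan --- reopen the proof of Theorem \ref{thm:T2} and track the $p$-dependent branches --- is indeed how the paper proceeds (Theorem \ref{thm:T3} is proved inside the proof of Theorem \ref{thm:T2}), and your account of the first assertion is essentially right: the extra term $|A|^{5/4}|H|/p^{1/4}$ comes from the $p$-dependent branch of the richness bound \eqref{eq:bb} (equivalently, Lemma \ref{lem10longpaper}), namely from the case where the minimum $\min(|A|^7/\Delta^3,\,p|A|^4/\Delta)$ sits on the second term, together with the low-richness case $\Delta\ll|A|^2/p$, which yields the dominated term $|A|^{3/2}|H|/p^{1/2}$. However, your attribution for the second assertion is wrong. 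You locate the $p$-dependence in $T_3(H)$, i.e.\ in the Minkowski-distance energy, but the term $|A|^{9/8}|H|/p^{1/8}=|A||H|\,(|A|/p)^{1/8}$ cannot arise there: pushing the $\F_p$ cases of Lemma \ref{lem:T3bd} through $\sigma^{20}\ll |A|^{22}|H|^{12}T_4(H)$ produces terms such as $|A|^{11/10}|H|^{19/20}p^{-1/20}$, not a term linear in $|H|$. In the paper the extra term again comes from the \emph{incidence} side: it is the second branch of \eqref{eq:bb} evaluated at the redefined richness threshold $\Delta=\sigma^4/(3|A|^3|H|^4)$ of the second Cauchy--Schwarz pass. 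The $p$-dependent cases of $T_3(H)$ are precisely what the footnote declines to fetch; the hypothesis $|A||H|^4\leqslant p^5$ is there so that only the $|H|^{4+1/3}$ branch of Lemma \ref{lem:T3bd} needs to be retained.

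Your description of the real case is also structurally off. Estimate \eqref{e:sigmaR1} is obtained from a \emph{single} Cauchy--Schwarz pass: one keeps $E(H)\ll |H|^2M_1$ and simply replaces \eqref{eq:bb} by the two-term Sharir--Solomon richness bound $m_k\ll |A|^4/k^3+|A|^6k^{-11/2}\log k$; the second and third terms of \eqref{e:sigmaR1} correspond to these two branches (giving $\sigma^6\ll|A|^7|H|^4M_1$ and $\sigma^{11}\ll|A|^{23/2}|H|^9M_1$ respectively), which is why both carry $M_1$ and why the leading term is $|A|^{1/2}|H|$. A second pass, as you propose, would force the threshold term $|A|^{3/4}|H|$ (absent from \eqref{e:sigmaR1}), would involve $M_2$ or a real distance-energy bound rather than $M_1^{1/11}$, and would not reproduce the stated exponents. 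So the route you sketch for the third term would not close.
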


	\section{Some applications} For a point set $P\in \F^2$ and $q=(x,y),\,q'=(x',y')\in P$ we refer to the quantity 
	\begin{equation}\label{eq:Mink}
	D_m(q,q'):=(x-x')^2-(y-y')^2
		\end{equation}
	as the Minkowski distance between $q$ and $q'$.

	\begin{cor}
	    \label{disthypbound}
		Let $A\subset\F$, with $|A+A|,\,|A-A|\leqslant K|A|<\sqrt{p}$. Then the number of realisations of a nonzero Minkowski distance between points of $A\times A$ is  $O(K^{6/5} |A|^{29/10})$.
		\end{cor}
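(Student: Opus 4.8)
The plan is to read $N(\lambda)$, the number of realisations of a fixed nonzero Minkowski distance $\lambda$ between points of $A\times A$, as an incidence count with the M\"obius hyperbolae of Theorem~\ref{thm:T2} after a $45^\circ$ rotation, and then feed the bound \eqref{e:sigma1} into it. First I would factor the Minkowski form \eqref{eq:Mink}. Writing $u=x+y$, $w=x-y$ (a bijection of $\F^2$ since $p\neq 2$), for $q=(x,y)$ and $q'=(x',y')$ one has $D_m(q,q')=(u-u')(w-w')$. Hence $N(\lambda)$ equals the number of pairs of points of the rotated set $P:=\{(x+y,\,x-y):x,y\in A\}$ whose coordinatewise differences multiply to $\lambda$. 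Fixing the second point $p'=(u',w')\in P$ turns $(u-u')(w-w')=\lambda$ into $w=w'+\lambda/(u-u')$, a translate of $XY=\lambda$ centred at $p'$. Thus $N(\lambda)=\sigma(P,H_P)$, the incidence count between the point set $P$ and the $|A|^2$ hyperbolae $H_P$ indexed by the same $P$.

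Next I would re-Cartesianise. Because $|A+A|,|A-A|\le K|A|$, the set $P$ sits inside the grid $(A+A)\times(A-A)$, and a fortiori inside $\tilde A\times\tilde A$ with $\tilde A:=(A+A)\cup(A-A)$, $|\tilde A|\le 2K|A|$. Since the relation $(u-u')(w-w')=\lambda$ is symmetric in $p,p'$, I may enlarge the copy of $P$ playing the role of the point set to all of $\tilde A\times\tilde A$ while keeping the $|A|^2$ hyperbolae $H_P$; this only increases the count, so $N(\lambda)\le \sigma(\tilde A\times \tilde A,\,H_P)$ with $|H_P|=|A|^2$. Now \eqref{e:sigma1} applies directly with the theorem's $|A|$ equal to $|\tilde A|\asymp K|A|$ and $|H|=|A|^2$. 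In the only meaningful range $|A|>K^3$ one has $|H|=|A|^2>(K|A|)^{3/2}$, so we land in the second branch of the definition of $M_1$; pleasantly, $M_1=|H|^{2/11}|\tilde A|^{8/11}$ is then automatic and no hypothesis on the number of hyperbolae sharing an abscissa or ordinate is required. The first term of \eqref{e:sigma1} contributes only $K^{1/2}|A|^{5/2}$, which is dominated, and the second term delivers a bound of the shape $K^{c}|A|^{29/10}$.

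The hard part is purely quantitative: the rotation is exactly what forces $P$ to be non-Cartesian, and the enclosure $P\subseteq \tilde A\times\tilde A$ inflates the point set by a factor $K^2$, so one must track the doubling constant $K$ sharply enough that this enlargement does not consume the $|A|^{1/10}$ saving of \eqref{e:sigma1}. The crude square enclosure already yields the correct power $|A|^{29/10}$ up to a negligible slack in the exponent; to pin down the stated $K^{6/5}$ I would instead apply the rectangular form of the Stevens--de Zeeuw-based estimate underlying Theorem~\ref{thm:T2} to the genuine grid $(A+A)\times(A-A)$, whose two sides carry the asymmetric exponents $3/4$ and $1/2$, optimise over the resulting expression, and use $K|A|<\sqrt p$ to stay within the admissible range of the incidence bound.
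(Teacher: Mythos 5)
Your reduction is the same as the paper's: rotate by $45^\circ$ so that $D_m$ factors as $(u-u')(w-w')$, view a fixed nonzero distance as an incidence count between the rotated point set, enclosed in the grid built from $A+A$ and $A-A$, and the $|A|^2$ translates of $xy=\lambda$ indexed by the rotated $A\times A$, then invoke estimate \eqref{e:sigma1}. However, as you yourself concede, your execution does not reach the stated bound: taking the second branch $M_1=|H|^{2/11}|\tilde A|^{8/11}$ with $|\tilde A|\asymp K|A|$ and $|H|=|A|^2$ gives $|A|^{6/5}_{\phantom{1}}$-type bookkeeping that works out to $K^{14/11}|A|^{32/11}$, which exceeds $K^{6/5}|A|^{29/10}$ in both exponents. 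The missing observation is that for this particular family $H$ (the image of $A\times A$ under the rotation) one knows $M$ exactly: at most $|A|$ translates share an abscissa or an ordinate, since fixing $x+y$ (or $x-y$) leaves at most $|A|$ pairs $(x,y)\in A\times A$. Because $M\leqslant|A|\leqslant|\tilde A|$, the pruning step at the outset of the proof of Theorem \ref{thm:T2} is vacuous and the bound holds with $M_1=M=|A|$; then the second term of \eqref{e:sigma1} is $(K|A|)^{6/5}(|A|^2)^{4/5}|A|^{1/10}=K^{6/5}|A|^{29/10}$, exactly the claim, with the first term $K^{1/2}|A|^{5/2}$ dominated. This is precisely what the paper does (``applying estimate \eqref{e:sigma1} \ldots with $M_1=|A|$'').

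Your proposed repair --- rerunning a ``rectangular'' Stevens--de Zeeuw argument on the genuine grid $(A+A)\times(A-A)$ and re-optimising --- is not carried out and is not the right lever: the slack does not come from replacing the rectangle by a square (that only costs powers of $K$, which you could afford), but from discarding the information that the hyperbola family is thin on vertical and horizontal lines. Your remark that ``no hypothesis on the number of hyperbolae sharing an abscissa or ordinate is required'' is exactly backwards for this application: that hypothesis is available for free and is what buys the exponent $29/10$. One further small point: you should also record that $|\tilde A|\ll K|A|<\sqrt p$ (up to the harmless constant) so that the hypothesis of Theorem \ref{thm:T2} is met.
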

	\begin{proof}
		If $|A+A|,\,|A-A|\leqslant K|A|$ then after the transformation $(x,y)\mapsto(\frac{x+y}{2},\frac{x-y}{2})$, the number of realisations of a nonzero Minkowski distance is bounded via the number of incidences between $(A+A) \,\times \,(A-A)$ with $|A|^2$ translates of the hyperbola $xy=1$.
		
		The claim follows after applying estimate \eqref{e:sigma1} of Theorem~\ref{thm:T2}, with $M_1=|A|$.

	\end{proof}
	Unfortunately, we do not see a way to extend the claim to get an unconditional nontrivial bound for the number of realisations of a single distance, the ``bad'' example being $A=A_1\cup A_2,$ with small $A_1+A_1$ and $A_2+A_2$ but large $A_1+A_2$.
	
	 We are not aware of a nontrivial, that is better than $O(|P|^{3/2}),$ bound on the number of realisations of a nonzero distance between pairs of a point set $P\subset \F_2$ in positive characteristic, where say $|P|<p$. In contrast, a recent paper of Zahl \cite{Zahl21} vindicates the latter exponent $3/2$ for $P$ being a set in {\it three}, rather than {\em two} dimensions (when $-1$ is not a square in $\F$ and $|P|<p^2$).
	 
	 \smallskip
	 Estimate \eqref{e:sigma1} of Theorem \ref{thm:T2} also has the following sum-product type implications.
	 
	 \begin{cor} Let $A\subset \F, $ with $|A|<\sqrt{p}.$ Then 
	 \begin{gather*}
	     |\{(a_1,a_2,a_3,a_4)\in A^4:\, (a_1+a_2)(a_3+a_4)=1\}|\,,\\
	      |\{(a_1,a_2,a_3,a_4)\in A^4:\, (a_1+a_2-a_4)(a_3+a_2 +a_4)=1\}|\,,\\
	       |\{(a_1,a_2,a_3,a_4)\in A^4:\, (a_1+a_2)(a_3+a_2a_4)=1\}|\,,\\
	    |\{(a_1,a_2,a_3,a_4)\in A^4:\, (a_1+a_2+a_4)(a_3+a_2a_4)=1\}|
	 \end{gather*}
	 are all $O(|A|^{29/10}).$
	 
	 Furthermore, if $|A+A|<K|A|<\sqrt{p}$, then the number of points of $A\times A$ on the hyperbola $xy=\lambda\neq 0$ is $O(K^{6/5} |A|^{29/10})$.
	 \end{cor}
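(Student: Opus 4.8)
The plan is to reduce each count to an incidence count $\sigma(A',H)$ to which estimate \eqref{e:sigma1} of Theorem~\ref{thm:T2} can be applied with a point set $A'$ of size $O(|A|)$ (of size $O(K|A|)$ in the last assertion), a family $H$ of $\Theta(|A|^2)$ translates, and multiplicity $M=O(|A|)$, in the spirit of the proof of Corollary~\ref{disthypbound}. Recall that a point $(x,y)$ is incident to the translate $(a,b)\in H$ precisely when $(y-a)(b-x)=\lambda$ (here $\lambda=1$; the general-$\lambda$ case needed below is identical after rescaling one coordinate). Accordingly I will, in each case, read off the defining relation as a product of two factors equal to $\lambda$, declare the first factor to be $y-a$ and the second to be $b-x$, and then distribute the four variables $a_1,\dots,a_4$: one variable per factor becomes a point coordinate (landing in $A\cup(-A)$, whence $|A'|\ll|A|$), while the remaining variables are absorbed into the parameter pair $(a,b)$ cutting out the hyperbola.

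Concretely, for $(a_1+a_2)(a_3+a_4)=\lambda$ I would set $y=a_1$, $x=-a_3$ and $(a,b)=(-a_2,a_4)$, so that $H=(-A)\times A$ has $|H|=|A|^2$ and every abscissa and every ordinate is shared by exactly $|A|$ translates, i.e. $M=|A|$. For $(a_1+a_2-a_4)(a_3+a_2+a_4)=\lambda$ I keep the same $x,y$ and use the \emph{sheared} family $H=\{(a_4-a_2,\,a_4+a_2):a_2,a_4\in A\}$; since $(a_2,a_4)\mapsto(a_4-a_2,a_4+a_2)$ is a bijection in odd characteristic, $|H|=|A|^2$, while a fixed abscissa (resp. ordinate) is attained by at most $|A|$ pairs $(a_2,a_4)$, so again $M=O(|A|)$. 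For the two bilinear relations I absorb the monomial $a_2a_4$ into the ordinate, taking $(a,b)=(-a_2,\,a_2a_4)$ for $(a_1+a_2)(a_3+a_2a_4)=\lambda$ and $(a,b)=(-(a_2+a_4),\,a_2a_4)$ for $(a_1+a_2+a_4)(a_3+a_2a_4)=\lambda$; in both cases $|H|=\Theta(|A|^2)$, and the point is that fixing the abscissa pins $a_2$ (respectively the symmetric function $a_2+a_4$), after which $b=a_2a_4$ takes at most $|A|$ values, while a fixed ordinate $b=a_2a_4$ has at most $|A|$ preimages — so $M=O(|A|)$. Degenerate contributions (for instance $a_2=0$, or characteristic $2$) are $O(|A|^2)$ and absorbed.

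In each case the map from solutions to incidences is injective, so the count is at most $\sigma(A',H)$, and \eqref{e:sigma1} applied with $M_1=M=|A|$ — the admissible choice here, exactly as in Corollary~\ref{disthypbound}, because $M$ is already small and the ``rich hyperbolae'' improvement is not needed — gives
\[
\sigma(A',H)\ \ll\ |A|^{1/2}\cdot|A|^{2}+|A|^{6/5}\bigl(|A|^{2}\bigr)^{4/5}|A|^{1/10}=|A|^{5/2}+|A|^{29/10}\ \ll\ |A|^{29/10}.
\]
This settles the four displayed bounds. For the final assertion, which parallels Corollary~\ref{disthypbound} but with both point coordinates being sums (whence only the hypothesis on $A+A$ is needed), the relevant point set lies in $(A+A)\times(A+A)$, an $A'\times A'$ with $|A'|\le 2K|A|$ when $|A+A|<K|A|$; the second term of \eqref{e:sigma1} then reads $(K|A|)^{6/5}(|A|^{2})^{4/5}|A|^{1/10}=K^{6/5}|A|^{29/10}$, which dominates the first term $K^{1/2}|A|^{5/2}$, giving exactly the stated $O(K^{6/5}|A|^{29/10})$.

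The only genuine work lies in the second paragraph: checking that every family $H$ really has $|H|=\Theta(|A|^2)$ and, more delicately, multiplicity $M=O(|A|)$. The bilinear families are where this is least automatic, since their abscissa and ordinate are not the two coordinates of a Cartesian product; the bound rests on the observation that fixing the abscissa determines $a_2$ (or $a_2+a_4$) and thereby confines $b=a_2a_4$ to a set of size $\le|A|$, and symmetrically for a fixed ordinate. I expect this multiplicity bookkeeping, together with the (routine but essential) justification that $M_1=M$ is a legitimate instance of \eqref{e:sigma1}, to be the main obstacle; once it is secured, the estimate collapses to the single display above.
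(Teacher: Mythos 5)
Your treatment of the four displayed counts is correct and is essentially the paper's own (largely unwritten) argument: the paper simply says ``apply \eqref{e:sigma1} with $M_1=|A|$ and $|H|=|A|^2$'', and your explicit parametrisations of the point set and the translate family, together with the multiplicity checks $M=O(|A|)$, supply exactly the details it omits. Two small remarks: for the fourth relation the map $(a_2,a_4)\mapsto(-(a_2+a_4),\,a_2a_4)$ is $2$-to-$1$ rather than injective, which costs only a constant; and your reading of $M_1$ as effectively $\min(M,\cdot)$ -- so that $M_1=|A|$ is legitimate even though $|H|=|A|^2>|A|^{3/2}$ -- is indeed how the paper itself invokes Theorem~\ref{thm:T2} here and in Corollary~\ref{disthypbound}, since the pruning step in its proof is performed only when $M$ exceeds the threshold.

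The final assertion is where your proposal has a genuine gap. The quantity to be bounded is the number of points of $A\times A$ on a \emph{single} hyperbola $xy=\lambda$, which on its face involves one translate, not $|A|^2$ of them; it is not structurally parallel to Corollary~\ref{disthypbound}, which bounds a count over \emph{pairs} of points of $A\times A$ and therefore produces $|A|^2$ translates for free. The missing idea is the amplification the paper uses: taking $\lambda=-1$, each solution of $a_1a_2=-1$ is rewritten as $(s_1-a)(s_2-a')=-1$ for every $(a,a')\in A^2$, with $(s_1,s_2)=(a_1+a,\,a_2+a')\in(A+A)^2$, whence
$$|\{(a_1,a_2)\in A^2:\ a_1a_2=-1\}|\ \leqslant\ \frac{1}{|A|^2}\,\sigma(A+A,H),\qquad H=A\times A\,.$$
Your paragraph computes $\sigma(A+A,H)\ll K^{1/2}|A|^{5/2}+K^{6/5}|A|^{29/10}$ correctly, but without the identity above there is no link between that incidence count and the quantity being bounded, and with it one must divide by $|A|^2$, obtaining $K^{1/2}|A|^{1/2}+K^{6/5}|A|^{9/10}$. (The exponent $29/10$ in the corollary's statement appears to have dropped this $|A|^{-2}$ factor: as printed, $O(K^{6/5}|A|^{29/10})$ is weaker than the trivial bound $|A|$.) So although your final expression coincides with the printed claim, it is the pre-division quantity, and the reduction that makes this part of the argument work is absent from your write-up.
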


	\begin{proof}
	The first group of estimates follows directly by applying estimate \eqref{e:sigma1} with $M_1=|A|$, and $|H|=|A|^2$.
	
	For the last estimate, WLOG $\lambda=-1$; we write
	\begin{multline*}
	 |\{(a_1,a_2)\in A^2:\,a_1a_2=-1\}| \leqslant\\ \frac{1}{|A|^2} |\{(a_1,a_2;s_1,s_2)\in A^2\times (A+A)^2:\,(s_1- a_1)(s_2-a_2)=-1\}|  
	 \end{multline*}and applying estimate \eqref{e:sigma1} to the set $A+A$, with $M_1=|A|$, and $|H|=|A|^2$.
	
		\end{proof}

	\section{Incidence bounds for M{\"o}bius hyperbolae} 
	
	As in the formulation of Theorem \ref{bourg}, a M\"obius Hyperbola is identified with a $SL_2$ (or $PSL_2$) transformation. We say that a hyperbola (transformation) $h$ is $k$-rich if it supports $\geqslant k$ points of $A\times A$, namely
	$|A\cap h(A)|\geqslant k.$

	Over the reals, the best known bound for incidences between points and M\"obius hyperbolae is due to  Solomon and Sharir~\cite{ShSo}.
	
	\begin{theorem}\label{thm:realincidence}
		Let $A\subset \R$ be a set of $n$ real numbers, and consider the set of M\"obius transformations on $\R$, the number of $k$-rich transformations is bounded by
		$$m_k\ll\frac{|A|^4}{k^3}+\frac{|A|^6}{k^{11/2}}\log k\,.$$
	\end{theorem}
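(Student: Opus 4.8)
The plan is to deduce this rich-curve count from a sharp point--curve incidence bound, in the same way the classical estimate for $k$-rich circles is deduced from the Aronov--Sharir point--circle incidence theorem. The appearance of the exponent $11/2$ is the signature of the pseudo-circle incidence bound, whose second term carries the denominators $6/11,9/11$; neither the Szemer\'edi--Trotter term alone, nor a direct Guth--Katz line-incidence argument in the dual parameter space $\mathbb{P}^3$ (where points of $P$ become planes and each pair of points a line, yielding at best $|A|^6/k^4$), produces it.

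First I would record the two geometric facts that place the M\"obius hyperbolae in the pseudo-parabola regime. Writing $h(x)=\frac{ax+b}{cx+d}$, each graph $y=h(x)$ is a simple curve (a line when $c=0$, otherwise a hyperbola, which I split into its two monotone branches at the cost of a harmless factor of $2$ in the number of curves). Then: (i) the family has three degrees of freedom, and three points with distinct abscissae and distinct ordinates determine a unique $h$; (ii) two distinct such curves meet in at most two points, since $h_1(x)=h_2(x)$ cross-multiplies to a quadratic in $x$ --- equivalently, the point--curve incidence graph is $K_{3,2}$-free. Set $P=A\times A$, so $|P|=|A|^2$; since a transformation is the graph of a function, the number of points of $P$ on the curve $y=h(x)$ equals $|\{x\in A: h(x)\in A\}|=|A\cap h(A)|$, so a $k$-rich curve carries $k$ points of $P$ and necessarily $k\leqslant |A|$.

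With (i)--(ii) in hand, the family meets the hypotheses of the Aronov--Sharir incidence bound for pseudo-parabolas with three degrees of freedom, giving
\[
I(P,\cC)\;\ll\;|P|^{2/3}|\cC|^{2/3}+|P|^{6/11}|\cC|^{9/11}\log^{2/11}\!\left(\frac{|P|^{3}}{|\cC|}\right)+|P|+|\cC|.
\]
I would then run the standard popularity argument: with $\cC_{\geqslant k}$ the set of $k$-rich curves and $m_k=|\cC_{\geqslant k}|$, the inequality $m_k\,k\leqslant I(P,\cC_{\geqslant k})$ together with the incidence bound (setting $|\cC|=m_k$ and solving term by term) yields
\[
m_k\;\ll\;\frac{|P|^{2}}{k^{3}}+\frac{|P|^{3}}{k^{11/2}}\log k+\frac{|P|}{k}.
\]
Substituting $|P|=|A|^2$ turns this into $\tfrac{|A|^4}{k^3}+\tfrac{|A|^6}{k^{11/2}}\log k+\tfrac{|A|^2}{k}$, and the bound $k\leqslant|A|$ absorbs the last term into the first, giving the claim.

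The one genuinely hard ingredient is the incidence bound --- specifically its second, $9/11$ term --- rather than the bookkeeping above. The first term $|P|^{2/3}|\cC|^{2/3}$, which alone produces $|A|^4/k^3$, follows from (ii) via Sz\'ekely's crossing-number method (each pair of curves crosses at most twice, so the incidence graph drawn along the curves has $O(|\cC|^2)$ crossings). Extracting the stronger $|A|^6/k^{11/2}$ term requires cutting the M\"obius hyperbolae into few \emph{pseudo-segments} --- arcs any two of which cross at most once --- and reapplying the crossing-number method to the pieces; bounding the number of cuts is where property (ii) and the three-dimensionality (i) are used in full, and where the logarithm and the denominator $11$ are produced through the solution of the resulting recursive inequality. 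I expect this cutting step to be the main obstacle, and it is precisely what forces the appeal to the Aronov--Sharir machinery rather than to the elementary incidence estimates used elsewhere in the paper.
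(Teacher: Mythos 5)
The paper does not actually prove Theorem~\ref{thm:realincidence}: it is quoted from Sharir and Solomon \cite{ShSo}, so there is no internal argument to compare yours against. That said, your reduction is the standard (and, as far as the published literature goes, the actual) route to a statement of this shape: check that the graphs of M\"obius transformations, split into $x$-monotone branches, form a family of arcs with three degrees of freedom in which two distinct curves cross at most twice; invoke the Aronov--Sharir / Agarwal--Nevo--Pach--Pinchasi--Sharir--Smorodinsky incidence bound $I(P,\cC)\ll |P|^{2/3}|\cC|^{2/3}+|P|^{6/11}|\cC|^{9/11}\log^{2/11}(|P|^{3}/|\cC|)+|P|+|\cC|$; and run the popularity argument. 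Your term-by-term extraction of $m_k$ is correct, with one bookkeeping point worth making explicit: the logarithm in the incidence bound has argument $|P|^{3}/m_k$, not $k$, so to land on the stated $\log k$ you should note that either $m_k\leqslant |P|^{3}/k^{11/2}$ (and you are done without any log), or $m_k>|P|^{3}/k^{11/2}$, in which case $\log(|P|^{3}/m_k)\ll\log k$ and raising $\log^{2/11}$ to the power $11/2$ produces exactly the claimed factor $\log k$. The genuinely hard content --- cutting the hyperbolae into pseudo-segments, bounding the number of cuts, and deriving the $6/11$, $9/11$ exponents and the logarithm from the resulting recursion --- sits entirely inside the black-boxed incidence theorem, as you candidly acknowledge; so what you have is a correct deduction of the theorem from that external result rather than a self-contained proof, which is precisely the status the statement has in the paper itself.
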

	
	We remark that over $\C$ the best known to our knowledge bound is 
	$$ m_k\ll \frac{|A|^6}{k^{5}}\,,$$
	due to Solymosi and Tardos \cite{ST07}.
	
	\smallskip
	We now prove a weaker analogue in positive characteristic, that we have referred to as the intermediate incidence bound.

	\begin{theorem}\label{thm:incidence}
		
		Let $A \subset \F=\F_p$ and $H$ be a set of $m>|A|$ 
		M{\"o}bius hyperbolae in $\F^2$.
		Then the number of incidences between $P=A\times A$ and $H$ satisfies
		
		\begin{equation}
		    \label{eq:hb}
	\sigma(A,H) \ll \frac{|H||A|^2}{p}+{|A|}^{1/2}|H|+\min\left(|A|^{7/5}|H|^{4/5} ,p^{1/3}|A|^{4/3}|H|^{2/3}\right)\,.	\end{equation}
		
		Moreover, if $k>\sqrt{|A|}$ and $|A|< \sqrt{p}$, then for any $\F$ of positive characteristic $p$,
		the maximum number of $k$-rich M\"obius hyperbolae is $O\left(\frac{|A|^7}{k^5}\right)$.
	\end{theorem}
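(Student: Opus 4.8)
The plan is to deduce both assertions from a single combinatorial lemma: a bound on the number of $k$-rich M\"obius hyperbolae, obtained by linearising the hyperbolae through a \emph{base-point inversion} and feeding the result into the Stevens--de Zeeuw bound \cite{SSFZ}. Fix a point $(x_0,y_0)\in A\times A$ and apply the coordinate change $\phi_{x_0,y_0}(x,y)=\big(\tfrac1{x-x_0},\tfrac1{y-y_0}\big)$. It sends $A\times A$, with the two lines $x=x_0$ and $y=y_0$ removed, bijectively onto the Cartesian product $A_{x_0}\times A_{y_0}$, where $A_{x_0}=\{(a-x_0)^{-1}:a\in A\setminus\{x_0\}\}$ has $|A|-1$ elements (likewise $A_{y_0}$). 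The key elementary identity is that if a M\"obius $h$ satisfies $h(x_0)=y_0$, then $h(x)-y_0$ is a linear-fractional function vanishing at $x_0$, so $\tfrac1{h(x)-y_0}=\tfrac{c}{x-x_0}+d$ for scalars $c\neq0,d$ depending on $h$. Hence in the coordinates $(X,Y)=\phi_{x_0,y_0}(x,y)$ every incidence $h(x)=y$ (other than $(x_0,y_0)$, sent to infinity) becomes a point of $A_{x_0}\times A_{y_0}$ on the genuine line $Y=cX+d$, and distinct transformations fixing $x_0\mapsto y_0$ give distinct lines. Since $h$ is injective, no other incidence point of $h$ shares a coordinate with $(x_0,y_0)$, so a $k$-rich $h$ through the base point yields a line carrying at least $k-1$ points of $A_{x_0}\times A_{y_0}$.

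This reduces the matter to counting rich lines. The Stevens--de Zeeuw theorem for a Cartesian product of two $|A|$-element sets and a line set $L$ gives, in the range secured by $|A|<\sqrt p$, an estimate of the form $I\ll |A|^{5/4}|L|^{3/4}+|A|^{1/2}|L|+|A|^2$; the number of lines meeting $A_{x_0}\times A_{y_0}$ in at least $k$ points is therefore $R_{\ge k}\ll |A|^5/k^4+|A|^2/k$, and the hypothesis $k>\sqrt{|A|}$ is precisely what makes the middle term inactive so that this estimate governs. Writing $\mathcal H$ for a family of $k$-rich transformations and $J_h\subseteq A\times A$ for the incidence set of $h$ (so $|J_h|\ge k$), double counting over base points gives
\[
|\mathcal H|\,k\ \le\ \sum_{h\in\mathcal H}|J_h|\ =\ \sum_{(x_0,y_0)\in A\times A}\#\{h\in\mathcal H:\,h(x_0)=y_0\}\ \le\ \sum_{(x_0,y_0)\in A\times A} R_{\ge k-1}\big(A_{x_0}\times A_{y_0}\big),
\]
because for each fixed base the transformations through it inject into the $(k-1)$-rich lines of the inverted product. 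With $k>\sqrt{|A|}$ (so $k-1\asymp k$) the inner count is $\ll |A|^5/k^4+|A|^2/k$, and there are $|A|^2$ base points, whence $|\mathcal H|\,k\ll |A|^7/k^4+|A|^4/k$, that is $|\mathcal H|\ll |A|^7/k^5+|A|^4/k^2\ll|A|^7/k^5$ since $k\le|A|$. This is the claimed rich-hyperbolae bound.

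The incidence estimate \eqref{eq:hb} then follows by a dyadic decomposition of $\sigma(A,H)$ in the richness $k$: levels $k\le\sqrt{|A|}$ contribute at most $|A|^{1/2}|H|$, while for $k>\sqrt{|A|}$ one inserts $\min(|H|,|A|^7/k^5)$ and optimises, the balance at $k\asymp(|A|^7/|H|)^{1/5}$ producing $|A|^{7/5}|H|^{4/5}$. The remaining two terms $|H||A|^2/p$ and $p^{1/3}|A|^{4/3}|H|^{2/3}$ are obtained independently, and without the restriction $|A|<\sqrt p$, by the dual picture: a hyperbola $\gamma xy-\alpha x+\delta y-\beta=0$ is a point $[\gamma:\alpha:\delta:\beta]$, the lifts $(x,y,xy)$ of $A\times A$ become planes in $\F_p^3$, and the point--plane incidence theorem of \cite{Ru18} furnishes the main term $|H||A|^2/p$ together with $p^{1/3}|A|^{4/3}|H|^{2/3}$; the $\min$ in \eqref{eq:hb} merely records which route is sharper.

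The clean step is the inversion, which simultaneously linearises the hyperbolae and preserves the Cartesian structure that Stevens--de Zeeuw requires. The parts that will need care, and where I expect the actual work, are threefold: the degenerate strata — constant transformations (excluded), affine maps ($\gamma=0$) whose graphs are already lines and must be counted separately, directly by \cite{SSFZ} and within the same $|A|^7/k^5$ budget, and the $O(|A|)$ points sent to infinity by $\phi_{x_0,y_0}$; the verification that for \emph{every} base point the constraints $|A|^2<p$ and $k>\sqrt{|A|}$ keep us uniformly inside the admissible range of the Stevens--de Zeeuw bound; and the assembly of the two independent routes so that the single estimate \eqref{eq:hb}, with its $\min$, is valid across all pertinent ranges of $|A|$ and $|H|$ relative to $p$.
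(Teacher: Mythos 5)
Your main argument is essentially the paper's own proof: fix a base point $q=(x_0,y_0)$, conjugate the M\"obius maps through $q$ by inversions so that they become affine maps, observe that the transformed point set is still a Cartesian product of two sets of size $\approx|A|$, apply the Stevens--de Zeeuw rich-line bound (the threshold $k>\sqrt{|A|}$ being exactly where $|A|^5/k^4$ beats the trivial $|A|^4/k^2$, which is also what keeps $|A||L|<p^2$ when $|A|<\sqrt p$), double count over the $|A|^2$ base points losing a factor $k$, and convert $m_k\ll|A|^7/k^5$ into $|A|^{1/2}|H|+|A|^{7/5}|H|^{4/5}$ by the standard dyadic optimisation. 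All of that is correct and matches the paper step for step, including the injectivity of $h\mapsto(c,d)$ on the stabiliser coset and the innocuousness of the affine ($\gamma=0$) stratum.

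The genuine gap is your treatment of the two $p$-dependent terms $|H||A|^2/p$ and $p^{1/3}|A|^{4/3}|H|^{2/3}$. You assert they follow ``independently'' from the point--plane theorem of \cite{Ru18} applied to the lift $(x,y)\mapsto(x,y,xy)$, but that theorem, applied to $|H|$ points and $|A|^2$ planes, yields terms of the shape $|H||A|^2/p+|A|^2|H|^{1/2}$ (plus a collinearity term), and no direct application of it produces the exponent pattern $p^{1/3}|A|^{4/3}|H|^{2/3}=\bigl(p\,|A|^4|H|^2\bigr)^{1/3}$. That term is not dual to anything: in the paper it arises \emph{inside the same base-point framework}, from the second branch $l_k\ll p|A|^2/k^2$ of Lemma~\ref{lem10longpaper} (valid for $k\geqslant 2|A|^2/p$), which after summing over base points gives $m_k\ll p|A|^4/k^3$ and, upon balancing $|H|k_*$ against $p|A|^4/k_*^2$, yields $p^{1/3}|A|^{4/3}|H|^{2/3}$; the term $|H||A|^2/p$ is then simply the trivial contribution of the richness range $2\leqslant k<2|A|^2/p$ where no nontrivial rich-line bound is available. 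So you do not need a second, independent route at all --- you need the large-set branch of the rich-line lemma that you dropped when you discarded the $|A|^2/k$ term; as written, your derivation of the full bound \eqref{eq:hb} is incomplete for $|A|\geqslant\sqrt p$.
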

	
	To prove the theorem we will need the following two incidence statements: the Stevens - de Zeeuw theorem \cite{SSFZ} and its corollary from \cite{Murphy2017NEWRO}, specific of $\F=\F_p$.
	
	\begin{theorem} \label{th:sdz} The number of incidences between the point set  $P=A\times A$ and a set $L$ of affine lines in $\F^2$, with $|A||L|<p^2$  is 
		$$\mathcal{I}(P,L)  \ll |A|^{5/4}|L|^{3/4} + |L|+|A|^2\,.$$
	\end{theorem}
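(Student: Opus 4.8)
The plan is to derive Theorem~\ref{th:sdz} from the point--plane incidence theorem of Rudnev in $\F_p^3$, which I will use as the main external engine: for a set of points $\mathcal{P}$ and a set of planes $\Pi$ with $|\mathcal{P}|\leqslant|\Pi|$ and $|\mathcal{P}|=O(p^2)$, one has $\mathcal{I}(\mathcal{P},\Pi)\ll|\mathcal{P}|^{1/2}|\Pi|+\kappa|\Pi|$, where $\kappa$ is the largest number of collinear points of $\mathcal{P}$. First I would dispose of the vertical lines: a vertical line $x=c$ carries $|A|$ points of $A\times A$ when $c\in A$ and none otherwise, so all vertical lines together contribute at most $|A|^2$, absorbed by the last term of the bound. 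It then remains to estimate the incidences with non-vertical lines $\ell:\,y=mx+t$, which I identify with the pairs $(m,t)$.

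The geometric heart is the lift of the line set to the hyperbolic paraboloid $Z=XY$. Sending each line $(m,t)\in L$ to the point $(m,t,mt)\in\F_p^3$ and each point $(a,b)\in A\times A$ to the plane $\pi_{a,b}=\{aX+Y=b\}$, the incidence $b=ma+t$ becomes precisely the incidence of $(m,t,mt)$ with $\pi_{a,b}$, so that $\mathcal{I}(A\times A,L)$ equals a point--plane count exactly. A direct application of Rudnev's theorem to this configuration is, however, far too lossy: the lifted points lie on a ruled quadric, and the rulings $X=\mathrm{const}$ and $Y=\mathrm{const}$ correspond to the pencils of parallel, respectively concurrent, lines of $L$, so the collinearity parameter $\kappa$ can be as large as $|A|$ and only the trivial bound $\mathcal{I}\ll|A||L|$ results.

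To rebalance I would first apply Cauchy--Schwarz to pass from $\mathcal{I}(A\times A,L)$ to the second moment $T=\sum_{\ell\in L}|P\cap\ell|^2$, using $\mathcal{I}(P,L)^2\leqslant|L|\,T$. The off-diagonal part of $T$ counts pairs of distinct points of $A\times A$ whose joining line lies in $L$; recording such a pair through its coordinate differences $(a_1-a_2,\,b_1-b_2)\in(A-A)\times(A-A)$ together with the paraboloid lift of the joining line yields a point--plane configuration that is now generic, in the sense that after removing a controlled family of degenerate pairs its collinearity parameter is $O(1)$. One of the two lifted sets then has size $\asymp|A|^2$ and the other $\asymp|L|$ up to the balancing factors, while the hypothesis $|A||L|<p^2$ keeps the whole configuration below the characteristic threshold $O(p^2)$ demanded by the point--plane theorem. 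Feeding the resulting estimate back gives $T\ll|A|^{5/2}|L|^{1/2}$ up to the lower-order contributions $|L|$ and $|A|^2$ arising from the degenerate pairs and from the diagonal, whence $\mathcal{I}(P,L)\leqslant(|L|\,T)^{1/2}\ll|A|^{5/4}|L|^{3/4}+|L|+|A|^2$.

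The step I expect to be the main obstacle is the degeneracy control: one must isolate the sub-configurations on which the lifted point set becomes collinear --- the rich lines of $L$, the pencils of concurrent lines, and the parallel classes --- and show simultaneously that off these the parameter $\kappa$ entering Rudnev's bound is genuinely bounded, and that the excised pieces contribute only the lower-order terms $|L|+|A|^2$. Arranging the Cauchy--Schwarz and the lift to interlock so that the two lifted sets carry exactly the sizes $|A|^2$ and $|L|$ producing the exponents $5/4$ and $3/4$, while certifying that $|A||L|<p^2$ is the precise inequality placing the configuration under the $O(p^2)$ ceiling, is the delicate bookkeeping at the centre of the argument.
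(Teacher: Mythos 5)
The paper itself offers no proof of Theorem~\ref{th:sdz}: it is quoted verbatim from Stevens--de Zeeuw \cite{SSFZ}, so your attempt must be measured against their original argument. Your choice of engine is the right one --- the Stevens--de Zeeuw proof is indeed a single application of Rudnev's point--plane theorem, and you correctly identify that a naive lift of the lines to $Z=XY$ fails because of the rulings --- but your actual reduction has a genuine gap that cannot be repaired in the form you describe. You apply Cauchy--Schwarz over the \emph{lines}, $\mathcal{I}(P,L)^2\leqslant|L|\,T$ with $T=\sum_{\ell\in L}|P\cap\ell|^2$, and then claim the off-diagonal pair count becomes, after pruning, an unweighted point--plane configuration of sizes $\asymp|A|^2$ and $\asymp|L|$ with collinearity parameter $O(1)$. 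No such encoding can exist. First, the arithmetic does not even match: Rudnev's main term at those sizes is $\min(|A|^2,|L|)^{1/2}\max(|A|^2,|L|)$, which at $|L|=|A|^2$ equals $|A|^3$, whereas Szemer\'edi--Trotter-extremal real grids (embeddable in $\F_p$ for large $p$, comfortably within $|A||L|<p^2$) already have $T\sim|L|k^2\sim|A|^{10/3}$ at $|L|=|A|^2$; so the bound your configuration would output is false, and the bound you actually need, $T\ll|A|^{5/2}|L|^{1/2}$, is essentially equivalent to the theorem itself (it follows from the rich-line bound $l_k\ll|A|^5/k^4$ of Lemma~\ref{lem10longpaper}, i.e.\ from the conclusion). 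Structurally, the obstruction is that the condition ``the line joining $(a_1,b_1)$ and $(a_2,b_2)$ lies in $L$'' imposes \emph{two} scalar equations (slope and intercept), while one point--plane incidence encodes \emph{one}; your difference coordinates $(a_1-a_2,b_1-b_2)$ see only the slope equation, the intercept condition depends on position rather than on the difference, the encoding carries multiplicities $r_{A-A}$ that Rudnev's unweighted theorem does not tolerate, and $|(A-A)\times(A-A)|$ can be as large as $|A|^4$, so neither your size bookkeeping nor your use of $|A||L|<p^2$ as the $O(p^2)$ ceiling survives.

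The correct interlocking, as in \cite{SSFZ}, puts the Cauchy--Schwarz on the other side: over one coordinate of the grid rather than over $L$. Writing $\mathcal{I}=\sum_{b\in A}\nu(b)$ with $\nu(b)=|\{(a,\ell):(a,b)\in\ell\}|$ gives $\mathcal{I}^2\leqslant|A|\sum_b\nu(b)^2\leqslant|A|\cdot\bigl|\{(a,a',\ell,\ell')\in A^2\times L^2:\ ma+t=m'a'+t'\}\bigr|$, a \emph{single} bilinear equation, since the common height $b$ is determined. This is exactly a point--plane count: points $(m,t,a')\in L\times A$ and planes $aX+Y-m'Z=t'$ indexed by $A\times L$, \emph{both} of size $|A||L|$ --- which is precisely where the hypothesis $|A||L|<p^2$ enters verbatim --- and Rudnev yields $\ll(|A||L|)^{3/2}+\kappa|A||L|$, whence $\mathcal{I}\ll|A|^{5/4}|L|^{3/4}$ plus terms governed by $\kappa\leqslant\max(|A|,\ \text{max collinear points of }L\text{ viewed as dual points})$; the degenerate families (vertical lines, pencils of concurrent or parallel lines, a single very rich line) are what produce $|L|+|A|^2$. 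You correctly anticipated that degeneracy control is where the work lies, but with the Cauchy--Schwarz on the wrong side the main term cannot be recovered at all.
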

	
	\begin{lemma}\label{lem10longpaper}		Let $A \subset \F_p$ and let $2|A|^2/p \leqslant k \leqslant |A|$ be an integer that is greater than
		$1$. The number $l_k$ of $k$-rich lines satisfies
		$$l_k \ll \min \left(\frac{p|A|^2}{k^2},\frac{|A|^5}{k^4}\right).$$
	\end{lemma}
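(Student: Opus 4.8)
The plan is to prove the two bounds in the minimum separately and then combine them. The bound $l_k \ll |A|^5/k^4$ will follow from the Stevens–de Zeeuw incidence theorem (Theorem \ref{th:sdz}), which encodes the Cartesian structure of $P = A\times A$, while the bound $l_k \ll p|A|^2/k^2$ will follow from the elementary spectral (Vinh-type) incidence estimate over $\F_p^2$, namely that for any point set $P$ and line set $L$ one has $\bigl|\mathcal I(P,L) - |P||L|/p\bigr| \leq \sqrt{p|P||L|}$. The role of the lower threshold $k \geq 2|A|^2/p$ is precisely to let the spectral main term be absorbed.

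For the first bound I would let $L$ be the set of $k$-rich lines, so that $|L| = l_k$ and, by the definition of richness, $\mathcal I(P,L) \geq k\,l_k$ with $|P| = |A|^2$. Substituting into the spectral inequality gives $k\,l_k \leq |A|^2 l_k/p + |A|\sqrt{p\,l_k}$. Since $k \geq 2|A|^2/p$ forces $|A|^2/p \leq k/2$, the main term is dominated by half of the left-hand side, leaving $\tfrac12 k\, l_k \leq |A|\sqrt{p\,l_k}$; dividing by $\sqrt{l_k}$ and squaring yields $l_k \ll p|A|^2/k^2$. Note that this step uses only $|P| = |A|^2$ and not the product structure, which is why it holds for all admissible $|A|$ rather than only for $|A| < \sqrt{p}$ (a naive second-moment count would here produce a spurious extra term $|A|^4/k^2$).

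For the second bound I would apply Theorem \ref{th:sdz} to the same $P$ and $L$, obtaining $k\,l_k \leq \mathcal I(P,L) \ll |A|^{5/4} l_k^{3/4} + l_k + |A|^2$. The middle term cannot be dominant once $k$ exceeds an absolute constant (and for bounded $k \geq 2$ the desired estimate is already trivial, since the crude collinear-pair count gives $l_k \ll |A|^4/k^2 \ll |A|^5/k^4$). If the first term dominates one solves $k\,l_k \ll |A|^{5/4} l_k^{3/4}$ to get $l_k \ll |A|^5/k^4$, and if the third dominates one gets $l_k \ll |A|^2/k$, which is $\leq |A|^5/k^4$ exactly because $k \leq |A|$. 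Taking the minimum of the two bounds then gives $l_k \ll \min\bigl(p|A|^2/k^2,\, |A|^5/k^4\bigr)$.

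The main obstacle is that Theorem \ref{th:sdz} carries the admissibility hypothesis $|A||L| < p^2$, which is not automatic for an arbitrary set of rich lines. The key observation making the argument self-consistent is that this hypothesis is guaranteed precisely in the regime where the second bound is the binding one: when $|A|^5/k^4 \leq p|A|^2/k^2$, i.e. $p k^2 \leq |A|^3$, the already-established first bound forces $l_k \ll p|A|^2/k^2 \leq p^2/|A|$, so that $|A||L| \ll p^2$. Thus one invokes Stevens–de Zeeuw only where its hypothesis holds and defers to the spectral bound otherwise; the only remaining work is routine bookkeeping of absolute constants across the threshold $k \sim |A|^{3/2}/\sqrt{p}$ that separates the two regimes.
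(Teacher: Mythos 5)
Your argument is correct and matches the standard proof: the paper itself does not prove Lemma \ref{lem10longpaper} but quotes it from \cite{Murphy2017NEWRO}, where it is obtained exactly as you do --- the Vinh-type spectral bound yields $l_k \ll p|A|^2/k^2$ (the threshold $k \geqslant 2|A|^2/p$ serving to absorb the main term $|P||L|/p$), and Theorem \ref{th:sdz} yields $l_k \ll |A|^5/k^4$, with the hypothesis $|A||L| < p^2$ bootstrapped from the first bound in the regime where the second is the smaller. One transcription slip to fix: the condition $|A|^5/k^4 \leqslant p|A|^2/k^2$ is equivalent to $|A|^3 \leqslant pk^2$, not ``$pk^2 \leqslant |A|^3$'' as you wrote; since your subsequent chain $l_k \ll p|A|^2/k^2 \leqslant p^2/|A|$ is precisely the inequality valid under $|A|^3 \leqslant pk^2$, the slip is confined to the gloss and the argument is unaffected.
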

	Note that (non-horizontal and non-vertical) lines in $A\times A$ can be viewed as affine, that is a particular case of M\"obius transformations. This fact underlies the following proof.

	\begin{proof}[Proof of Theorem~\ref{thm:incidence}]
		Let $q=(a,a')\in A\times A=P$ be a point in our point set. Let $H_q\subseteq H$ be the subset of hyperbolae incident to the point $q$. Similarly to above, for $k\geqslant 1$, refer to a hyperbola of $H_q$ as $k$-rich if it supports at least k points of $A\times A$ different from $q=(a,a')$.
		Next, we identify $H_q$ with the set of M\"obius transformations, $M_q$ (that is liner-fractional maps $f(z)=\frac{az+b}{cz+d}$ with $ad-bc\neq0$) mapping $a$ to $a'$. We note $M_q$ lies in a coset of the normal affine subgroup $PSL_2(\F_p)$, the subgroup of upper-triangular matrices. Also for a $g\in M_p$ we have that $\frac{1}{a'-z}\circ g\circ (a-\frac{1}{z})$ maps infinity to infinity.
		
		This shows that the number of incidences between $H_q$ and points in $P$ other than $q$ is equal to the number of incidences between $m=|H_q|$ affine lines and the point set $B\times C:= \frac{1}{a-A}\times (a'-A^{-1})$. Note $|B|=|C|=|A|$.

		 Next, we use Lemma~\ref{lem10longpaper} to bound the number of $k$-rich lines when $\max\left(\frac{2|A|^2}{p},2\right)\leqslant k \leqslant|A|$. Note that unless $\F=\F_p,$ we have taken $|A|<\sqrt{p},$ so $\frac{2|A|^2}{p}\leq 2$. Over $\F_p$, when $2\leqslant k <\frac{2|A|^2}{p}$ one cannot have any nontrivial incidence bound, which accounts for the first term in estimate \eqref{eq:hb}. We also add the trivial Cauchy-Schwarz bound $l_k\leq |A|^4/k^2$.
		
	 Combining this with Lemma~\ref{lem10longpaper} yields the following bound on the number of $k$-rich transformations in $H_q$ by
		$$
		\ll \min \left( \frac{|A|^5}{k^4}, \frac{p|A|^2}{k^2}, \frac{|A|^4}{k^2} \right)\,.
		$$
		The third term in the latter estimate is smaller than the first one if $k\leqslant \sqrt{|A|}$. We proceed, assuming that $k>\sqrt{|A|}$, accounting for the case to the contrary by including the second term in the estimate of the theorem. 
		
		To continue, we sum over $q\in A\times A=P$ observing that we count each $k$-rich hyperbola in $H$ at least $k$ times. This bounds $m_k$, the number of $k$-rich hyperbolae in $H$ as follows:
		
		\begin{equation}m_k\ll\min\left(\frac{|A|^7}{k^5},\frac{p|A|^4}{k^3}\right).\label{eq:bb}\end{equation}

		The proof is concluded by the standard conversion of the latte estimate into an incidence bound. 
		Assuming  $m_k\ll|A|^7/k^5$ and we take some $k=k_*$ and optimise between the estimate $\ll \frac{|A|^7}{k_*^5}$ for the number of incidences, supported on $k_*$-rich hyperbolae and $\ll mk_*$ for the rest of the hyperbolae. Choosing  $k_* = |A|^{7/5}|H|^{-1/5}$ accounts for the first term under the minimum in the theorem's claim. Doing the same thing assuming $m_k\ll\frac{p|A|^2}{k^2}$ accounts for the remaining term and completes the proof in the case $\F=\F_p$.
		
		In the case of general $\F$ we note that once we are only interested in $k\geqslant \sqrt{|A|}$, the constraint $|A|<p$ and the trivial estimate $|A|^4/k^2$ on the number of $k$-rich lines guarantee that the condition $|A||L|<p^2$ of Theorem \ref{th:sdz} is satisfied as to the set $L$ of $k$-rich lines, and hence one has $|L|\ll |A|^5/k^4$ as was used above.
	
	\end{proof}

	\section{Proof of Theorems ~\ref{thm:T2}, \ref{thm:T3}}
We present the proof of Theorem \ref{thm:T2}, making additional remarks in the special case $\F=\F_p$, pertaining to Theorem \ref{thm:T3}: this is when we allow $|A|^2/p\gg 1.$
	
	\begin{proof}
	
		To prove the bound \eqref{e:sigma1} we start out with pruning away the set of translates of the hyperbola that lie on the union of a small number of very rich vertical or horizontal lines. This is done only if $M>|A|$, otherwise at this stage we do nothing. Let $H_1$ denote the  translates, lying on at most $|H|/(x|A|)$, say vertical lines with at least $(x|A|)$ translates per line, for some $x\geqslant 1$. They contribute, trivially, at most $|A||H|/x$ to the quantity $\sigma(A,H)$. Assuming $M =  (x|A|)$ we determine $x$ by setting
		$$
		|A||H|/x = |A|^{6/5}|H|^{4/5}(x|A|)^{1/10}\,,
		$$
		the right-hand side being the bound we will prove in the immediate sequel.
		This means $x = |H|^{2/11}/|A|^{3/11}$. This is exceeds $1$ only if $|H|>|A|^{3/2}$, in which case we have a saving that  determines the choice of $M_1$ apropos of estimate \eqref{e:sigma1}, hence $x|A| = |H|^{2/11}|A|^{8/11}.$ This determines the choice of $M_1$.
		
		We do the same thing concerning  the bound \eqref{e:sigma2}, where we interpolate 
		$$
		|A||H|/x = |A|^{11/10}|H|^{17/20}(x|A|)^{1/10}\,,
		$$
		then $x = |H|^{3/22}/|A|^{2/11}>1$ if $|H|>|A|^{4/3}$, this determines the choice of $M_2$.

		We now move on the proving \eqref{e:sigma1}. Retaining the notation $H$ for the remaining set of translates, 
		apply Cauchy-Schwarz to the summation over $A$, with a shortcut $\sigma=\sigma(A,H)$:
		
		\begin{equation} \label{e:cs1}
			\sigma^2=\left(\sum_{h\in H, a\in A}\mathbbm{1}_A(ha)\right)^2\\
			\leqslant|A|\sum_{u\in HH^{-1}}r_{HH^{-1}}(u)\sum_{ a \in A}\mathbbm{1}_A(ua)
		\end{equation}
		
		Set \begin{equation}\label{e:delt} \Delta:=\frac{\sigma^2}{3|A||H|^2}. \end{equation}
		
		By the pigeonhole principle, since $\sum_{u\in HH^{-1}}r_{HH^{-1}}(u)=|H|^2$,  a positive proportion of the set of incidences is supported on 
		the set $\Omega$ of M\"{o}bius hyperbola, such that for $u\in\Omega$ we have $$\forall u\in \Omega\,,\;\;\;\sum_{a\in A}\mathbbm{1}_A(ua)\geqslant\Delta.$$  
		
		Henceforth we assume $\Delta\gg1$, for otherwise 
		$$\sigma\ll|A|^{1/2}|H|\,,$$
		which accounts for the first term in estimate \eqref{e:sigma1}.
		
		Applying Cauchy-Schwarz to the summation in $u$, restricted to $\Omega$ in \eqref{e:cs1}, yields   
		\begin{equation} \sigma^4\ll|A|^2 E(H)\sum_{u\in\Omega} \left( \sum_{a\in A} \mathbbm{1}_A(ua)\right)^2\,,\label{inequality1}\end{equation}
		where $E(H)$ is the energy, defined by \eqref{e:energy}.
		
		Using formula \eqref{eq:bb}, we conclude that if $\Delta \gg \frac{|A|^2}{p}$, that is unless
		\begin{equation}\label{e:int}
			\sigma\ll  \frac{|A|^{3/2}|H|}{p^{1/2}}\,,
		\end{equation}
		one has (after dyadic summation in $k\geqslant \Delta$ in formula \eqref{eq:bb})
		
		$$\sum_{u\in\Omega}\left(\sum_{a\in A} \mathbbm{1}_A(ua) \right)^2 \ll \min\left( \frac{|A|^7}{\Delta^3},\,\frac{p|A|^4}{\Delta}\right)\,.
		$$
		Observe that from definition of $\Delta$ the minimum being achieved on the second term means that
		\begin{equation}\label{e:int1}
			\sigma\ll  |A||H| \frac{|A|^{1/4}}{p^{1/4}} \,,
		\end{equation}
		which accounts for the corresponding additional term in the statement of Theorem \ref{thm:T3}.
		
		Assuming that the minimum is achieved on the first term
		and applying the bound $E(H)\ll|H|^2M_1$ from the forthcoming Lemma~\ref{lem:T2} (with the quantity $M_1$ having been defined in the pruning procedure at the outset)  gives
		$$\sigma^{10}\ll|A|^{12}|H|^{8}M_1$$
		and completes the proof of estimate \ref{e:sigma1}.
		
		To address the real case in estimate \eqref{e:sigmaR1} in Theorem \ref{thm:T3} we 
		merely recalculate the quantity
		$$\sum_{u\in\Omega}\left(\sum_{a\in A} \mathbbm{1}_A(ua) \right)^2 $$ using Theorem \ref{thm:realincidence}, and estimate \eqref{e:sigmaR1} follows.

		We proceed towards proving estimate \eqref{e:sigma2} by another application of Cauchy-Schwarz  to the summation in $A$ in \eqref{e:cs1}: 
		This yields 
		$$\sigma^4\ll |A|^3\sum_{u\in HH^{-1}HH^{-1}}r_{HH^{-1}HH^{-1}}(u)\sum_{ a \in A}\mathbbm{1}_A(ua).$$
		
		As above, a positive proportion of the set of incidences must be supported on the set $\Omega$ of M\"obius hyperbolae $u$, supporting at least 
		$$\Delta:=\frac{\sigma^4}{3|A|^3|H|^4}\,$$
		points of $A\times A$, thus redefining $\Delta$. We proceed under assumption $\Delta\gg 1$, or else $\sigma\ll|A|^{3/4}|H|.$

		Hence, again by Cauchy-Schwarz, 
		\begin{equation}
			\sigma^8\ll|A|^6 T_4(H)\sum_{u\in\Omega}\left(\sum_{a\in A} \mathbbm{1}_A(ua)\right)^2\,.\label{inequality1'}\end{equation}
		
		Applying \eqref{eq:bb} to estimate the incidence term we assume that the minimum is achieved on its first term, or else by definition of $\Delta$ one has
		$$\sigma \ll (|A||H)(|A|/p)^{1/8}\,,
		$$ which enters the statement of Theorem \ref{thm:T3} in $\F=\F_p$ case.
		
		Hence,
		$$
		\sum_{u\in\Omega}\left(\sum_{a\in A} \mathbbm{1}_A(ua)\right) \ll \frac{ |A|^{16}|H|^{12}}{\sigma^{12}}\,. 
		$$
		Furthermore, by Lemma \ref{lem:T3bd}, we have
		$$T_4(H) \leqslant |H|^2T_3(H) \ll |H|^{6+1/3}$$ in $\F$ of characteristic $p$, with $|H|<p$, and in the specific case 
		$\F=\F_p$ 
		$$
		T_4(H) \leqslant |H|^2T_3(H) \ll |H|^5M_2^2 + \begin{cases}
			\frac{|H|^7}{p},& \text{ if } |H|>p^{5/4}\\
			p^{2/3}|H|^{5+2/3},& \text{ if } p\leqslant|H|\leqslant p^{5/4}\\
			|H|^{6+1/3},&\text{ if } |H|<p
		\end{cases}\,.$$
		Note that the quantity $M_2$, corresponding to the maximum number of translates in $H$, lying on a horizontal/vertical line has been redefined according to the pruning procedure at the outset of the proof.
		
		Combining the last two estimates finishes the proof of Theorems \ref{thm:T2} and  \ref{thm:T3}.  
		

	\end{proof}

	\section{Energy bounds for $H$ }\label{sec:T2}
		This section generalises, from $H$ being a Cartesian product to the general $H$, following statement that Shkredov's Theorem \ref{th:sh} relied on.
	
	\begin{lemma}\cite[Lemma 14]{Sh20} \label{lem:sh}
		For $H=B\times B$  the following estimates hold.
		$$E(H) \leqslant |B|^2E_+(B)\,,$$
		and
		\begin{equation}\label{e:t3sh}T_3(H) \leqslant |B|^2
		\sum_x r^2_{(B-B)(B-B)}(x) + |B|^8\,.\end{equation}
		In positive characteristic, for $|B|<p^{1/2},$ then
		$$
		\sum_x r^2_{(B-B)(B-B)}(x) \lesssim |B|^{5}(E_+(B))^{1/2}\,,
		$$
		if $\F=\F_p$ the  constraint $|B|<p^{1/2},$ can be removed  by adding the extra term $|B|^8/p$ to the right-hand side of the latter estimate.
	\end{lemma}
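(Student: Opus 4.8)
The plan is to separate the two \emph{algebraic} identities (the bounds on $E(H)$ and $T_3(H)$) from the two \emph{arithmetic} bounds on $\sum_x r^2_{(B-B)(B-B)}(x)$: the former I would obtain by an explicit Bruhat-type computation in $PSL_2$, the latter by a point--line incidence argument. Writing a translate as a composition of an inversion and two shifts, $h(x)=a+\frac1{b-x}=\tau_a\nu\tau_{-b}(x)$ with $\tau_c(x)=x+c$ and $\nu(x)=-1/x$, gives $h_{a,b}=U_a\nu U_{-b}$, where $U_c=\begin{pmatrix}1&c\\0&1\end{pmatrix}$ and $\nu=\begin{pmatrix}0&-1\\1&0\end{pmatrix}$. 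Since $\nu U_c\nu=L_{-c}$ in $PSL_2$, with $L_d=\begin{pmatrix}1&0\\d&1\end{pmatrix}$, I get the clean formula $h_1h_2^{-1}=U_{a_1}L_{b_1-b_2}U_{-a_2}$, whose lower-left entry is exactly $b_1-b_2$.

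For $E(H)$ this lower-left entry is the key: fixing the matrix $h_1h_2^{-1}$ forces $b_1-b_2=b_1'-b_2'$ and, on the stratum where this common value $s$ is nonzero, the two diagonal entries $1+a_1 s$ and $1-a_2 s$ force $a_1=a_1'$ and $a_2=a_2'$. Counting collisions over this stratum collapses to $|B|^2\sum_{s}r_{B-B}(s)^2=|B|^2E_+(B)$ (the free choice of $a_1,a_2$ contributing $|B|^2$, the pairs of $b$'s contributing $r_{B-B}(s)^2$), and the degenerate stratum $b_1=b_2$ is of the same order; this gives the first bound, up to the implied constant.

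For $T_3(H)$ I would multiply once more, obtaining $h_1h_2^{-1}h_3=U_{a_1}L_{b_1-b_2}U_{a_3-a_2}\nu U_{-b_3}$, and compute that its lower-left entry equals $(b_1-b_2)(a_2-a_3)-1$, a shifted element of $(B-B)(B-B)$. Equating lower-left entries in $h_1h_2^{-1}h_3=h_1'h_2'^{-1}h_3'$ therefore imposes the collision $(b_1-b_2)(a_2-a_3)=(b_1'-b_2')(a_2'-a_3')$ in $(B-B)(B-B)$, while the two diagonal entries then determine $a_1,b_3$ (and their primed versions) as rational functions of the remaining data. Organising the count by the common lower-left value and by the slopes $s=b_1-b_2,\ w=a_2-a_3$ shows that the stratum with nonzero lower-left entry contributes at most $|B|^2\sum_x r^2_{(B-B)(B-B)}(x)$ -- here $\sum_x r^2_{(B-B)(B-B)}(x)$ is the multiplicative energy of the difference set carrying its natural $B$-multiplicities, i.e.\ $\#\{(b_1-b_2)(b_3-b_4)=(b_5-b_6)(b_7-b_8)\}$ -- and the triangular stratum (lower-left entry $0$) is bounded crudely by $|B|^8$. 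The delicate point here is the bookkeeping of the multiplicities $r_{B-B}$ carried by the pairs $(b_1,b_2),(a_2,a_3)$ and the clean separation of the triangular stratum.

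It remains to bound $\sum_x r^2_{(B-B)(B-B)}(x)=\#\{(b_i)\in B^8:(b_1-b_2)(b_3-b_4)=(b_5-b_6)(b_7-b_8)\}$. Writing $(b_1-b_2)/(b_5-b_6)=(b_7-b_8)/(b_3-b_4)=:t$ expresses this as $\sum_t g(t)^2$, where $g(t)=\#\{(b_1,b_2,b_3,b_4)\in B^4:b_1-b_2=t(b_3-b_4)\}$ is the (weighted) number of points of $(B-B)\times(B-B)$ on the line through the origin of slope $t$. I would realise $g(t)$ as incidences between the Cartesian point set $B\times B$ and the pencil of slope-$t$ lines $\{y=tx+(b_2-tb_4)\}$, apply the Stevens--de Zeeuw bound of Theorem~\ref{th:sdz}, and sum dyadically over the richness of the lines; the factor $E_+(B)^{1/2}=\|r_{B-B}\|_2$ enters through a Cauchy--Schwarz step on the difference representation function, producing $\lesssim|B|^5E_+(B)^{1/2}$. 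This is valid while the hypothesis $|A||L|<p^2$ of Theorem~\ref{th:sdz} is met, which $|B|<p^{1/2}$ guarantees; over $\F_p$ with larger $B$ one replaces the rich-line count by the $\F_p$-specific estimate $l_k\ll p|A|^2/k^2$ of Lemma~\ref{lem10longpaper}, whose statistical term is responsible for the additional $|B|^8/p$. I expect this last, sum--product step to be the main obstacle: obtaining the sharp $E_+(B)^{1/2}$ dependence requires feeding the additive structure of $B-B$ into the incidence bound through the weighting, since treating $B-B$ as an unstructured set of size up to $|B|^2$ would lose the saving entirely.
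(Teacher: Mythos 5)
First, a point of context: this lemma is quoted verbatim from Shkredov \cite[Lemma 14]{Sh20} and is \emph{not proved in the paper}; the paper records it for comparison, proves generalisations of the first two inequalities to non-Cartesian $H$ (Lemmas \ref{lem:T2} and \ref{lem:T3} in Section \ref{sec:T2}), and explicitly defers the proof of the final sum--product estimate to \cite{Sh18}. So there is no in-paper proof to compare against, only the closely analogous matrix computations of Section \ref{sec:T2}. Measured against those, your treatment of the two algebraic bounds is correct and in the same spirit: the factorisation $h_{a,b}=U_a\nu U_{-b}$ and the identity $h_1h_2^{-1}=U_{a_1}L_{b_1-b_2}U_{-a_2}$ reproduce exactly the matrix the paper computes (lower-left entry $w_1=b_1-b_2$, diagonal entries $1+a_1w_1$ and $1-a_2w_1$), giving $E(H)\leqslant |B|^2E_+(B)$ up to the bookkeeping of the degenerate stratum $w_1=0$. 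For $T_3(H)$ your lower-left entry $(b_1-b_2)(a_2-a_3)-1$ matches the paper's $-(1+w_1w_2)$, and in the Cartesian case your stratification — the non-Borel part contributing at most $|B|^2\sum_x r^2_{(B-B)(B-B)}(x)$ because $(a_1,b_3)$ may be chosen freely and then $(a_1',b_3')$ is pinned down by the diagonal entries and the determinant, the Borel part contributing crudely $|B|^8$ — is clean and is actually simpler than the paper's general argument, which must first apply Cauchy--Schwarz to identify $h_2$ with $h_2'$. (You should say a word about the sign ambiguity in $PSL_2$, i.e.\ the case $h_1h_2^{-1}h_3=-h_1'h_2'^{-1}h_3'$, but handling it only adjusts constants.)

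The genuine gap is the third claim, $\sum_x r^2_{(B-B)(B-B)}(x)\lesssim |B|^{5}(E_+(B))^{1/2}$. Your sketch names the right ingredients (the slopes $t$, the quantity $g(t)=E_+(B,tB)$, Theorem \ref{th:sdz} / Lemma \ref{lem10longpaper}, dyadic summation), but it does not actually produce the exponent. The only Cauchy--Schwarz steps you indicate yield $g(t)\leqslant E_+(B)$ and hence $\sum_t g(t)^2\leqslant E_+(B)\sum_t g(t)\ll |B|^4E_+(B)$ for the nondegenerate part, and $|B|^4E_+(B)$ is never better than $|B|^5(E_+(B))^{1/2}$ (their ratio is $(E_+(B))^{1/2}/|B|\geqslant 1$). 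Extracting the square root of the additive energy is the entire content of this part of \cite[Lemma 14]{Sh20} (proved in the appendix of \cite{Sh18}) and requires a finer interplay between the popularity of slopes and the rich-line count than ``a Cauchy--Schwarz step on the difference representation function''; as written, this is a plausible plan rather than a proof, as you yourself acknowledge. The heuristic you give for the extra $|B|^8/p$ term in the unrestricted $\F_p$ case (the statistical main term of the rich-line bound) is, however, the correct one.
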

	
	The generalisation we prove is as follows (we split the statement into two).
	
		\begin{lemma}\label{lem:T2}
		The energy of a set $H$ of translates of the hyperbola $y=-1/x$ is bounded by
		$$E(H)\ll|H|^2M\,,$$
		where $M$ is the maximum number of translates $(a,b)\in H$ having the same abscissa or ordinate.
	\end{lemma}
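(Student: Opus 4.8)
The plan is to pass to matrices and exploit the fact that the product $h_1h_2^{-1}$ depends on very few of the parameters of $h_1,h_2$, which collapses the energy into additive energies along the fibres of $H$. Realise each translate $h=(a,b)$, $h(x)=a+\tfrac1{b-x}$, as the matrix $\left(\begin{smallmatrix} -a & ab+1\\ -1 & b\end{smallmatrix}\right)$ and write $E(H)=\sum_g r^2_{HH^{-1}}(g)$. Since $h^{-1}=(b,a)$, a direct multiplication gives
$$h_1h_2^{-1}=\begin{pmatrix} a_1\delta+1 & a_1-a_2-a_1a_2\delta\\ \delta & 1-a_2\delta\end{pmatrix},\qquad \delta:=b_1-b_2\,.$$
The whole point is that this matrix is determined by the triple $(a_1,a_2,\delta)$ alone, and conversely $(a_1,a_2,\delta)$ can be read back off from it whenever $\delta\neq 0$ (from the bottom-left, top-left and bottom-right entries); when $\delta=0$ the product degenerates to the parabolic element $\left(\begin{smallmatrix}1 & a_1-a_2\\ 0 & 1\end{smallmatrix}\right)$, determined by $a_1-a_2$. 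I would split the count of $E(H)$ according to these two cases, which are disjoint because the bottom-left entry $\delta$ distinguishes them.

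For the generic part ($\delta\neq0$) the equation $h_1h_2^{-1}=h_1'h_2'^{-1}$ is equivalent to $a_1=a_1'$, $a_2=a_2'$ and $b_1-b_2=b_1'-b_2'$. Writing $B_a:=\{y:(a,y)\in H\}$ for the fibre of ordinates above abscissa $a$, the generic contribution is exactly
$$\sum_{a_1,a_2}\ \sum_{\delta\neq0} r^2_{B_{a_1}-B_{a_2}}(\delta)\ \leqslant\ \sum_{a_1,a_2} E_+(B_{a_1},B_{a_2})\,.$$
Specialising to $H=B\times B$ here returns Shkredov's quantity $|B|^2E_+(B)$ of Lemma \ref{lem:sh}, which is how I would sanity-check the computation. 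Then I would use the elementary bound $E_+(X,Y)\leqslant \min(|X|,|Y|)\,|X||Y|$, valid since $\max_t r_{X-Y}(t)\leqslant\min(|X|,|Y|)$; as every fibre satisfies $|B_a|\leqslant M$, this gives $E_+(B_{a_1},B_{a_2})\leqslant M|B_{a_1}||B_{a_2}|$, and summing over all abscissae yields $\sum_{a_1,a_2}M|B_{a_1}||B_{a_2}|=M\bigl(\sum_a|B_a|\bigr)^2=M|H|^2$.

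For the parabolic part ($\delta=0$) the same equation forces $b_1=b_2$, $b_1'=b_2'$ and $a_1-a_2=a_1'-a_2'$. Organising by the fibres $A_b:=\{x:(x,b)\in H\}$ of abscissae above a fixed ordinate $b$, this contribution is
$$\sum_{c}\Bigl(\sum_{b} r_{A_b-A_b}(c)\Bigr)^2=\sum_{b,b'}\sum_c r_{A_b-A_b}(c)\,r_{A_{b'}-A_{b'}}(c)\leqslant\sum_{b,b'}\sqrt{E_+(A_b)\,E_+(A_{b'})}\,.$$
Since $E_+(A_b)\leqslant |A_b|^2\max_c r_{A_b-A_b}(c)\leqslant M|A_b|^2$, each summand is at most $M|A_b||A_{b'}|$ and the total is again $M\bigl(\sum_b|A_b|\bigr)^2=M|H|^2$. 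Adding the two cases, and absorbing the factor-$2$ loss from passing between $SL_2$ and $PSL_2$ (the sign ambiguity can only match a matrix to its negative, impossible for the entries above in odd characteristic), gives $E(H)\ll M|H|^2$.

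The only genuine work is the opening matrix computation together with the bookkeeping that separates the generic and parabolic regimes; once $h_1h_2^{-1}$ is seen to depend only on $(a_1,a_2,b_1-b_2)$, the energy literally unfolds into additive energies along the fibres of $H$, and the fibrewise cardinality bound $M$ does the rest. I expect the point requiring the most care to be the degenerate case $\delta=0$: it must be handled separately because the parametrisation by $(a_1,a_2,\delta)$ breaks down there, and one has to confirm that a generic product can never coincide with a parabolic one — which is exactly the observation that the bottom-left entry $\delta$ is an invariant separating the two families.
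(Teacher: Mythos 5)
Your proof is correct and follows essentially the same route as the paper's: the same matrix realisation $h=\left(\begin{smallmatrix}-a & ab+1\\ -1 & b\end{smallmatrix}\right)$, the same computation showing $h_1h_2^{-1}$ depends only on $(a_1,a_2,b_1-b_2)$, and the same split into the generic and degenerate ($\delta=0$) systems with the fibrewise bound $M$ closing the count --- the paper is in fact terser and you handle the parabolic case more carefully than its stated second system. The one slip is the parenthetical claim that the $PSL_2$ sign ambiguity is impossible: one can have $h_1h_2^{-1}=-h_1'(h_2')^{-1}$ (take $\delta'=-\delta\neq0$, $a_1'=a_1+2/\delta$, $a_2'=a_2-2/\delta$), but this case is again $O(|H|^2M)$ by the identical fibre count, so the bound is unaffected.
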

	
\begin{lemma}\label{lem:T3} Let $H$ be a set of translates of the hyperbola $y=-1/x$. Then
$$ T_3(H) := \sum_{x} r_{HH^{-1}H}^2(x)\leqslant 2|H| Q(H)  + 2|H|^4\,,$$
where 
$$
Q(H)=|\{(h_1,h_2,h_1',h_2')\in H^4:\, D(h_1,h_1')=D(h_2,h_2')\}|\,,
$$
with 
$$D(h,h')= D((a,b),(a',b')') := (a-a')(b-b')\,.$$
\end{lemma}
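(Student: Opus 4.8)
The plan is to make the non-commutative structure completely explicit and then convert the word identity defining $T_3$ into three scalar equations, two of which \emph{pin} a pair of differences while the third is exactly the relation counted by $Q$. First I would record the factorisation $h_{a,b}=T_a R T_{-b}$, where $T_c$ is translation by $c$ and $R(x)=-1/x$ is an involution; this immediately gives $h_{a,b}^{-1}=h_{b,a}$ and, after cancelling the inner translations, the clean formula
\[
h_1h_2^{-1}h_3=T_{a_1}\,W(s,t)\,T_{-b_3},\qquad W(s,t)=\begin{pmatrix}-t&1\\ st-1&-s\end{pmatrix},
\]
with $s=b_2-b_1$ and $t=a_3-a_2$. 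The key structural point is that the product depends on the triple only through the four quantities $a_1,b_3,s,t$.

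Reading off the entries of the resulting matrix $\bigl(\begin{smallmatrix}a_1u-t&*\\ u&-b_3u-s\end{smallmatrix}\bigr)$ with $u=st-1$ (equivalently, the data $g(\infty)$, $g^{-1}(\infty)$ and the $(2,1)$-entry), two triples produce the \emph{same} M\"obius hyperbola precisely when
\[
st=s't',\qquad (a_1-a_1')(st-1)=t-t',\qquad (b_3'-b_3)(st-1)=s-s',
\]
up to the harmless $PSL_2$ sign (a factor $2$) and the degenerate locus $st=1$ on which the product fixes $\infty$. At this stage I would record the trace identity $\mathrm{tr}(hh'^{-1})=D(h,h')+2$, which shows that $Q(H)$ is exactly the $L^2$-moment of the Minkowski distance, i.e.\ the trace-energy of $HH^{-1}$; crucially, the first equation $st=s't'$ is nothing but the equality of the two products of difference-pairs that $Q$ tabulates.

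The counting step then parametrises triples by the reduced data $\rho=(a_1,b_3,s,t)$ with fibre size $F(\rho)$, so that the non-degenerate part of $T_3(H)$ equals $\sum_{\rho\sim\rho'}F(\rho)F(\rho')$, the relation $\sim$ being the three equations above. Summing in the right order, equations (ii) and (iii) fix the abscissa-gap $a_1-a_1'$ and the ordinate-gap $b_3'-b_3$ once $(s,t,s',t')$ are chosen, which contributes the single factor $|H|$, while the outer sum over $(s,t,s',t')$ subject to $st=s't'$ is exactly $Q(H)$. For $H=B\times B$ the fibre factorises as $F(\rho)=\mathbbm{1}[a_1\in B]\,\mathbbm{1}[b_3\in B]\,r_{B-B}(s)\,r_{B-B}(t)$, and this reproduces Shkredov's $|H|\sum_x r^2_{(B-B)(B-B)}(x)$; the general statement is the same computation with the multiplicity-weighted $Q$.

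The degenerate locus $st=1$ yields upper-triangular products, pinned by two parameters, whose contribution is bounded directly by $O(|H|^4)$; collecting constants gives the stated $2|H|Q(H)+2|H|^4$. I expect the main obstacle to be precisely the passage from the Cartesian to the general $H$: when $H$ is not a product the fibre $F(\rho)$ no longer factorises, because the middle translate $h_2$ couples the two otherwise free endpoint coordinates along the path $h_1\!-\!h_2\!-\!h_3$. The delicate part is therefore to decouple these two ``edges'' — most naturally by a Cauchy--Schwarz in the middle vertex $h_2$ — in such a way that one still extracts the factor $|H|$ and lands on $Q(H)$ while losing only the displayed constant and \emph{no} power of $|H|$; keeping the saving sharp here is what upgrades Shkredov's Cartesian Lemma~\ref{lem:sh} to arbitrary $H$.
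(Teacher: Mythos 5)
Your algebraic setup is sound and closely parallels the paper's: the factorisation $h_{a,b}=T_aRT_{-b}$, the reduction of $h_1h_2^{-1}h_3$ to the four parameters $(a_1,b_3,s,t)$, the identification of the degenerate locus $st=1$ with the Borel (upper-triangular) case, and the trace identity $\mathrm{tr}(hh'^{-1})=D(h,h')+2$ are all correct, and the paper indeed handles the Borel contribution separately with an $O(|H|^4)$ bound (though it needs a small coset computation, its estimate \eqref{e:borel}, to justify this). However, there is a genuine gap at the heart of the argument, and you have in effect flagged it yourself. The relation $st=s't'$, i.e.\ $(b_2-b_1)(a_3-a_2)=(b_2'-b_1')(a_3'-a_2')$, is a condition on a \emph{sextuple} of elements of $H$ mixing coordinates of three distinct translates; it is \emph{not} the rectangular-quadruple condition defining $Q(H)$, and the claim that ``the outer sum over $(s,t,s',t')$ subject to $st=s't'$ is exactly $Q(H)$'' together with ``equations (ii) and (iii) contribute the single factor $|H|$'' is only valid when the fibre $F(\rho)$ factorises, i.e.\ in the Cartesian case $H=B\times B$ (where it recovers Shkredov's \eqref{e:t3sh}). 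For general $H$ your counting step does not go through as written, and since handling general $H$ is precisely the content of the lemma, this is the missing piece rather than a technicality.

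The paper closes this gap as follows, and it requires more than the Cauchy--Schwarz you propose. After Cauchy--Schwarz in the middle vertex one must bound $|H|\sum_{x\notin B,\,h_2}r^2_{Hh_2^{-1}H}(x)$, i.e.\ count quintuples $(h_1,h_3,h_1',h_3',h_2)$ with $h_1h_2^{-1}h_3=h_1'h_2^{-1}h_3'\notin B$ and the \emph{same} $h_2$ on both sides. Only with $h_2=h_2'$ do the three matrix-entry equations combine (dividing the two linear relations by the $(2,1)$-entry $c\neq 0$) to yield the genuine rectangular quadruple $(a_1-a_1')(b_1-b_1')=(a_3-a_3')(b_3-b_3')$, i.e.\ $D(h_1,h_1')=D(h_3,h_3')$. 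The remaining, and essential, step you do not address is that each \emph{nontrivial} such quadruple admits at most two compatible $h_2$: the point $(a_2,b_2)$ must lie on the intersection of the two hyperbolae $(a_3-x)(b_1-y)=c$ and $(a_3'-x)(b_1'-y)=c$, and the residual degenerate subcase $a_3=a_3',\,b_1=b_1'$ is eliminated using the top-right matrix entry, forcing $h_3=h_3'$ and $h_1=h_1'$. Without this ``at most two $h_2$ per nontrivial quadruple'' count, the Cauchy--Schwarz in $h_2$ yields $|H|$ times a quintuple count that you cannot convert into $Q(H)+|H|^3$, and the bound $2|H|Q(H)+2|H|^4$ does not follow.
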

The quantity $D(h,h')$, with $h,h'\in H$ becomes the Minkowski distance $D_m$, defined be \eqref{eq:Mink} after rotating $H$ by $45^\circ$; in \cite{RNR} quadruples in $Q(H)$ were referred to as {\em rectangular quadruples}, we also use this term in the sequel.

	Hence, the relation \eqref{e:t3sh} in Lemma \ref{lem:sh} is a particular case of the claim of Lemma \ref{lem:T3}. 
	However, the sum-product type bound, following \eqref{e:t3sh} in Lemma \ref{lem:sh} (for proof see e.g. \cite{Sh18})  is slightly stronger than the general one we provide below in Theorem \ref{th:dist}.

	\begin{proof}[Proof of Lemma \ref{lem:T2}]
	The proof is merely mimicking the corresponding part of 
	the proof of Shkredov's Lemma \ref{lem:sh}.
	
	We represent the hyperbola	$ y = a + 1/(b-x)$ by the $SL_2$ matrix $h=\bpm - a & ab+1 \\ -1 & b \epm.$
Without loss of generality we may assume that none of $a,b$ are ever zero.

We have, with $w_1= b_1-b_2$.
$$ h_1h_2^{-1} = \bpm 1+a_1w_1 & a_1-a_2-a_1a_2w_1 \\ w_1 & 1-a_2w_1\epm\,.$$

		Hence, $E(H)$ can be seen to equal the number of solutions to the following set of equations.
		
		\begin{equation}\label{eqsysmtem:T2}
			\begin{aligned}
				a_1 &=a_1',\\
				b_1-b_2&=b_1'-b_2',\\
				a_2&=a_2'
			\end{aligned} \qquad \mbox{or}  \qquad   \begin{aligned}
				b_1 &=b_1',\\
				a_1-a_2&=a_1'-a_2',\\
				b_2&=b_2'\,.
			\end{aligned} 
		\end{equation}
		This completes the proof.
	\end{proof}

\begin{proof}[Proof of Lemma \ref{lem:T3}] 
We will reuse the matrix notation from the previous proof. Note that there is always a trivial bound $T_3(H)\leqslant |H|^2E(H)$, thus from Lemma \ref{lem:T2}  $T_3(H)\leqslant|H|^4M$.

The following argument is somewhat more involved than Shkredov's proof of \eqref{e:t3sh}, where the Cartesian product scenario enables one to easily switch between various $h$'s appearing in the $T_3$ quantity. 

However, the claim one ends up with is in the same spirit: in both estimates for $T_3(H)$ the first term pertains to the rectangular quadruple count, while the second term estimates separately the contribution coming from the Borel subgroup of $SL_2$.

To start we prove the following Lemma to estimate
$$
X_B:= \max_{g \not\in B} \sum_{x\in gB} r^{2}_{HH^{-1}}(x)\,,
$$
where $B$ is the Borel subgroup of upper-triangular matrices.

Let us show that \begin{equation}\label{e:borel}
  X_B\leqslant |H|^2\,.  
\end{equation}

Indeed, a left coset of $B,$ which is not $B$ itself, is defined by a matrix $g=\bpm 1&0\\c&1\epm,$ with $c\neq 0$. 
$$ gB = \bpm s& t \\ cs & ct + s^{-1}\epm\,,\qquad (s,t)\in \F^2,\,s\neq 0\,.$$

Suppose $g_1g_2^{-1} = g_1'(g_2')^{-1}\in gB$.
This means, since $c=0$ that $w_1=w_1'\neq 0$, and therefore, by equating the diagonal entries in $g_1g_2^{-1}$, that
$a_1=a_1'$ and $a_2=a_2'$.

Furthermore, since $c(1+a_1w_1)=w_1$, we can determine $w_1=w_1'$, unless $ca_1=1$ which we will show cannot happen else we reach a contradiction as we have that $w_1=ca_1w_1+c=w_1+c$. So returning to the case $ca_1\neq1$, 
given $g_1=(a_1,b_1)$ and $(a_2',b_2')$ we know $w_1=w_1'$, hence $b_2,b_1$, as well as $a_2,a_1'$. This accounts 
for the claim in estimate \eqref{e:borel}, for  $ca_1=1$ would imply $c=0$.

We are now ready to finish the proof of the claim of Lemma \ref{lem:T3}. Partition
$$
T_3 =  \sum_{x\in B} r_{HH^{-1}H}^2(x)  +   \sum_{x\not \in B} r_{HH^{-1}H}^2(x):= Y_B
+ \overline{Y}_B,
$$
where $Y_B$ is the part of $T_3$, corresponding to $x\in B$, and $ \overline{Y}_B$ the complement.

It follows from \eqref{e:borel} that
$$Y_B\leqslant |H|^4\,.$$

Indeed, let us write
$$
Y_B =  \sum_{x\in B} \left( \sum_{h_3 \in H} r_{HH^{-1}h_3}(x) \right)^2 \leqslant |H|  \sum_{x\in B,\,h_3\in H} 
r_{HH^{-1}h_3}^2(x) \,.$$
Observe that for each value of $h_3$, one has
$$ r_{HH^{-1}h_3}^2(x) =|\{(h_1,h_2,h_1',h_2')\in H^4:\, h_1h_2^{-1}=h_1'(h_2')^{-1}\in h_3^{-1}B\}|\,.$$
Now apply estimate \ref{e:borel}, for each $h_3$, also observe that $h_3\not\in B$.

The Borel case considered above has contributed $|H|^4$ to the bound of the lemma. It remains to estimate the quantity $\overline{Y}_B$. 

By Cauchy-Schwarz,
$$
 \sum_{x \not \in B} r_{HH^{-1}H}^2(x) =  \sum_{x \not \in B} \left ( \sum_{h_2\in H} r_{Hh_2^{-1}H}(x) \right)^2\leqslant 
 |H|\sum_{x \not \in B, h_2\in H} r^2_{Hh_2^{-1}H}(x)\,.
$$
The product appearing in $T_3$ equals
$$
h_1h_2^{-1}h_3 = \bpm  -a_1(w_1w_2+1)-w_2 & 1+a_1w_1+b_3(w_2+a_1(1+w_1w_2)) 
\\ -(1+w_1w_2) & w_1+b_3(1+w_1w_2)\epm\,,
$$
with an extra notation $w_2=a_3-a_2$. In  addition, we have $1+w_1w_2 \neq 0$.
And we have $h_1h_2^{-1}h_3 = h_1'h_2^{-1}h_3',$ with the corresponding notations 
$w_1'=b_1'-b_2$, $w_2'=a_3'-a_2$.

It follows that 
\begin{equation}\label{e:misc}
c=w_1w_2=w_1'w_2',\qquad (a_1'-a_1)c = w_2-w_2' = (a_3-a_3'),\qquad (b_3-b_3')c = b_1'-b_1\,.
\end{equation}
Since $c\neq 0$, this implies
$$
(a_1-a_1')(b_1-b_1') = (a_3-a_3')(b_3-b_3')\,.
$$
This is a rectangular quadruple, with $(a_2,b_2)$ having been eliminated.

It remains to show that given a nontrivial rectangular quadruple, there is only at most two $(a_2,b_2)$, corresponding to it. 
Of course, if the quadruple is trivial, that is $h_1=h_1',\,h_3=h_3'$, then there are $|H|$ choices for $h_2$.

Suppose, we have a fixed nontrivial quadruple $(h_1,h_3,h_1',h_3')$, 
which means from equations \eqref{e:misc} we know $c$. Thus $(a_2,b_2)$ is on the intersection of $H$ with the 
hyperbola $(a_3-x)(b_1-y)=c$, as well as the hyperbola  $(a'_3-x)(b'_1-y)=c$. The intersection is at most two points, 
unless this is the same hyperbola, namely $a_3=a_3',\, b_1=b_1'$.

Furthermore,  from equalising the top right entries of $h_1h_2^{-1}h_3=h_1'h_2^{-1}h_3',$ we have
$b_3w_2 - b_3'w_2' = s$,  where the right-hand side $s$ is known from the quadruple. 
Therefore, we can determine $a_2$, and hence have at most two $h_2$, unless in addition to already 
having $a_3=a_3',\, b_1=b_1'$ we have $b_3=b_3'$. 

But then we have $h_3=h_3'$, and therefore $h_1=h_1'$, so are in the trivial quadruple case. This adds another $|H|^4$ to the bound of the lemma and completes the proof.
\end{proof}

	\subsection{Minkowski distance $L^2$ bound}
	By Lemma \ref{lem:T3}
	\begin{equation}
	T_3(H)\lesssim |H| Q(H)+|H|^4\,,\label{eq:t3}
		\end{equation}
	where $Q(H)$ is the number of rectangular quadruples in $H$. After changing variables $(a,b)\to \left(\frac{a+b}{2},\frac{a-b}{2}\right)$, with $H_m$ replacing $H$ in the new variables one has, with
	now $h_1=(a_1,b_1),\ldots,h_2'=(a_2',b_2')$ in $H_m$, that $Q(H)$ equals the number of solutions of
	$$ (a_1-a_1')^2 - (b_1-b_1')^2 =  (a_2-a_2')^2 - (b_2-b_2')^2\,.
	$$
	Clearly, if $-1$ is a square in $\F$,  one is free to change $-$ to $+$ in the quadratic form, becoming the analogue of the Euclidean distance $\|\cdot\|$  in $\F^2$. 
	
	In this respect, we take advantage of the following result by Murphy et al. in \cite{murphy2020pinned}.

	\begin{theorem}[Theorem 4 \cite{murphy2020pinned}] \label{th:dist}
		Let $H_m\subseteq \F^2_p$. Set
		$$
		Q^*(H_m) = |\{(h_1,h_2,h_1',h_2') \in H_m^4:\,\|h_1-h_1'\| = \|h_2-h_2'\| \neq 0\}|\,.
		$$
		Then 
		$$Q^*(H_m)\ll\begin{cases}
			\frac{|H_m|^4}{p},& \text{ if } |H_m|>p^{5/4}\\
			p^{2/3}|H_m|^{8/3},& \text{ if } p\leqslant|H_m|\leqslant p^{5/4}\\
			|H_m|^{10/3},&\text{ if } |H_m|<p\,.
		\end{cases}$$
		Moreover,  $Q^*(H_m,) \ll |H_m|^{10/3}$, for  $|H_m|<p$ in any field of characteristic $p$.
	\end{theorem}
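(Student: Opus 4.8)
The plan is to bound the distance energy $Q^*(H_m)=\sum_{t\neq 0}N(t)^2$, where $N(t)=|\{(h,h')\in H_m^2:\|h-h'\|=t\}|$ and $n:=|H_m|$, by running the Elekes--Sharir--Guth--Katz scheme for the Erd\H os distance problem \cite{GK15} over $\F_p$. The first step is to read $Q^*(H_m)$ as a count of pairs of ordered point-pairs at a common nonzero distance and to translate it into the group $G=\F^2\rtimes SO_2$ of orientation-preserving rigid motions. Since $SO_2$ acts simply transitively on each nonzero-norm sphere (in the split model $\|x\|^2=zw$ of \cite{RNR}, the sphere $\{zw=t\}$ and the torus $SO_2\cong\F_p^*$ both have $p-1$ elements, and in general Witt's theorem gives transitivity), a quadruple with $\|h_1-h_1'\|=\|h_2-h_2'\|\neq 0$ is carried onto the diagonal by exactly two isometries, one in each of $SO_2$ and $O_2\setminus SO_2$. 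Assigning to each pair $(a,c)\in H_m^2$ the set $\ell_{a,c}=\{\tau\in G:\tau a=c\}$, the Elekes--Sharir parametrization makes the $\ell_{a,c}$ genuine lines in the three-dimensional group, and $Q^*(H_m)$ becomes, up to the diagonal, the number of intersecting pairs among these $n^2$ lines, namely $\sum_{\tau}\mu(\tau)^2$ with $\mu(\tau)=|H_m\cap\tau H_m|$.

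The second step is to feed this three-dimensional line family into a characteristic-sensitive incidence bound; this is where the reduction to three dimensions pays off, the naive encoding of the equal-distance equation being only a point-hyperplane problem in $\F^5$. For $n<p$ I would invoke the Guth--Katz line-intersection technology \cite{GK15}, in the form transported to finite fields and to the split form in \cite{RNR}, to obtain the configuration-independent bound $Q^*(H_m)\ll n^{10/3}$, valid over any field of characteristic $p$; the weaker finite-field line-incidence input is exactly what replaces the Euclidean exponent $3$ by $10/3$ and accounts for the last sentence of the statement. For $n\geqslant p$ the polynomial method is no longer available, and I would instead dualize the three-dimensional line--line incidences into a point--plane incidence problem in $\F_p^3$ and apply Rudnev's theorem \cite{Ru18}. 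Its three contributions, of the shapes $mn/p$, $m^{1/2}n$ and $kn$ with $m,n$ of order $n^2$ and $k$ the maximal number of motions in a common pencil, then yield after balancing the remaining terms $n^4/p$ (for $n>p^{5/4}$) and $p^{2/3}n^{8/3}$ (for $p\leqslant n\leqslant p^{5/4}$), recovering the three ranges of the statement.

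The main obstacle, as always in this circle of ideas, is the control of degenerate configurations, for the scheme is a priori circular: summing $\mu(\tau)^2$ over all $\tau$ merely reproduces $Q^*(H_m)$, so the genuine saving comes entirely from bounding the concentration of the lines $\ell_{a,c}$, i.e. from the $kn$-type error term (and, in the Guth--Katz regime, from the lines lying in a common plane or regulus). I would therefore spend most of the effort showing that no pencil of $G$ can carry too many of the motion-lines unless $H_m$ itself collapses onto a low-degree curve, and separating off the two parasitic contributions that must be excluded: the trivial quadruples (with $h_1=h_2,\ h_1'=h_2'$, or the reflected coincidences) and the zero-distance locus $t=0$. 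The orientation-reversing half of the count, governed by $O_2\setminus SO_2$, is handled by repeating the argument verbatim with a reflection in place of a rotation, which is routine once the orientation-preserving case is settled; a final bookkeeping point is to check that the Elekes--Sharir maps are honestly linear in the chosen coordinates on $G$, for which the split/Minkowski model $zw$ of the norm form (available over $\overline{\F_p}$, or over $\F_p$ itself when $-1$ is a square) is the most convenient, exactly as in \cite{RNR}.
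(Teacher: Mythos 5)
First, a point of order: the paper does not prove Theorem~\ref{th:dist} at all -- it is imported verbatim as Theorem~4 of \cite{murphy2020pinned} and used as a black box -- so there is no internal proof to compare your proposal against. Judged on its own merits, your Elekes--Sharir setup (rigid motions, simple transitivity of $SO_2$ on spheres of nonzero radius, the lines $\ell_{a,c}$ in the three-dimensional group, the identity $Q^*\approx\sum_\tau\mu(\tau)^2$) is the correct framework and is indeed the engine behind estimates of this type over $\F_p$. But the route you then take has a real problem. If the global line--line intersection count for the $N=|H_m|^2$ motion-lines went through cleanly via Rudnev's point--plane theorem (or any ``Guth--Katz technology transported to finite fields''), it would give $N^{3/2}=|H_m|^3$, which is \emph{stronger} than the stated $|H_m|^{10/3}$. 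The fact that the theorem only claims $|H_m|^{10/3}$ is telling you that the global energy argument does not close: the concentrated line families (rich planes and reguli in the motion group), which you explicitly defer to ``most of the effort'', are exactly what kills it, and no finite-field substitute for the Guth--Katz plane/regulus analysis is known. As written, the $|H_m|<p$ case is therefore not proved, and the claimed balancing in the other two regimes is asserted rather than derived.

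The shape of the bound reveals the intended argument, which is much simpler: all three regimes are exactly $|H_m|^2$ times the finite-field \emph{single-distance} bound. Writing $N(t)=|\{(h,h')\in H_m^2:\|h-h'\|=t\}|$, one has
$Q^*=\sum_{t\neq0}N(t)^2\leqslant\bigl(\sum_{t\neq0}N(t)\bigr)\cdot\max_{t\neq0}N(t)\leqslant|H_m|^2\max_{t\neq0}N(t)$,
so it suffices to prove $N(t)\ll |H_m|^2/p+\min\bigl(p^{2/3}|H_m|^{2/3},\,|H_m|^{4/3}\bigr)$ for each fixed $t\neq0$; the thresholds $|H_m|=p$ and $|H_m|=p^{5/4}$ are literally where these three terms exchange dominance. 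The rigid-motion/point--plane machinery you describe is then applied one sphere at a time, where the degeneracy issues become manageable. So: right toolbox, but the decisive reduction (Cauchy--Schwarz down to the single-distance count) is missing, and the step you propose in its place would not deliver the claimed exponents.
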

	Since Theorem \ref{th:dist} allows for $-1$ being a square in $\F$, it applies to the distance $D$ as well. (Besides, in the case $|H_m|<p$ we can always pass to an extension of $\F$.)
	
	Hence, in order to bound the quantity $T_3$ we will just need to add to the above bound on $Q^*(H_m)$ the count of rectangular quadruples in $H$, contributed by the case
	$D(h_1,h_1')=D(h_2,h_2')=0$. If $M$ is the maximum number of points in $H$ on a horizontal or vertical line, their number is trivially at most $M^2|H|^2$. After multiplying by $|H|$ according to \eqref{eq:t3}, this term will dominate the $|H|^4$ term.

	We have therefore established the following lemma.
	\begin{lemma}\label{lem:T3bd}
		For a set $H$ of translates of the hyperbola $y=-1/x$ in $\F_p^2$, such that at most $M$ translates $(a,b)\in H$ have the same abscissa or ordinate, one has
		$$T_3(H)\ll  |H|^3M^2+\begin{cases}
			\frac{|H|^5}{p},& \text{ if } |H|>p^{5/4}\\
			p^{2/3}|H|^{3+2/3},& \text{ if } p\leqslant|H|\leqslant p^{5/4}\\
			|H|^{4+1/3},&\text{ if } |H|<p\,.
		\end{cases}$$
		The bound 
		$$
		T_3(H) \ll |H|^3M^2 + |H|^{4+1/3}\,.
		$$
		holds over a general $\F$ of characteristic $p$, provided that $|H|<p$.
	\end{lemma}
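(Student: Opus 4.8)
The plan is to assemble the desired bound from Lemma \ref{lem:T3} together with the distance-energy estimate of Theorem \ref{th:dist}, so that the only genuinely new work is isolating and counting the degenerate (zero-distance) rectangular quadruples.

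First I would invoke Lemma \ref{lem:T3} in the form $T_3(H) \ll |H| Q(H) + |H|^4$ (cf. \eqref{eq:t3}), which reduces the problem to bounding the rectangular-quadruple count $Q(H)$. Next I would perform the linear change of variables $(a,b) \mapsto (\tfrac{a+b}{2}, \tfrac{a-b}{2})$, producing a set $H_m$ of the same cardinality and turning the defining relation $D(h_1,h_1') = D(h_2,h_2')$ into the equality of pseudo-Euclidean distances $(a_1-a_1')^2 - (b_1-b_1')^2 = (a_2-a_2')^2 - (b_2-b_2')^2$. Since Theorem \ref{th:dist} is stated for the Euclidean form $\|\cdot\|$ but explicitly permits $-1$ to be a square, it applies verbatim to this form; in the range $|H| < p$ one may, if needed, pass to a quadratic extension of $\F$ to realise $-1$ as a square, which does not alter the count.

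I would then split $Q(H)$ according to whether the common distance vanishes. The nonzero part is precisely $Q^*(H_m)$, controlled by the three-case estimate of Theorem \ref{th:dist}; multiplying each case by $|H|$ through \eqref{eq:t3} yields exactly the three branches $|H|^5/p$, $p^{2/3}|H|^{3+2/3}$ and $|H|^{4+1/3}$. For the zero-distance part, a quadruple contributes only when $D(h_1,h_1') = D(h_2,h_2') = 0$, that is when $h_1,h_1'$ share an abscissa or an ordinate and likewise $h_2,h_2'$. Bounding the number of ordered pairs sharing an abscissa by $\sum_a N_a^2 \leqslant M\sum_a N_a = M|H|$, where $N_a$ denotes the number of translates with abscissa $a$ (and symmetrically for ordinates), gives $O(M|H|)$ degenerate pairs, hence $O(M^2|H|^2)$ degenerate quadruples; multiplying by $|H|$ produces the $M^2|H|^3$ term, which absorbs the residual $|H|^4$ coming from \eqref{eq:t3} in each of the stated ranges. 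The universal bound $Q^*(H_m) \ll |H_m|^{10/3}$ of Theorem \ref{th:dist} then delivers the final $|H|^3M^2 + |H|^{4+1/3}$ estimate valid over a general $\F$ with $|H| < p$.

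The one step needing real care --- and the closest thing to an obstacle --- is the interface between the discrete multiplicity parameter $M$ and the analytic distance bound: one must check that the degenerate locus is genuinely governed by the abscissa/ordinate multiplicity $M$, so that the degenerate contribution is $M^2|H|^3$ rather than a cruder $|H|^4$, and that the transfer of Theorem \ref{th:dist} from $\|\cdot\|$ to the hyperbolic form $D$ incurs no loss. Everything else is direct substitution and the verification that $|H|^4$ is dominated in each of the three ranges.
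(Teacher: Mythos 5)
Your proposal is correct and follows essentially the same route as the paper: it combines Lemma \ref{lem:T3} with the change of variables $(a,b)\mapsto(\frac{a+b}{2},\frac{a-b}{2})$, applies Theorem \ref{th:dist} to the nonzero-distance quadruples, and bounds the degenerate quadruples with $D(h_1,h_1')=D(h_2,h_2')=0$ by $O(M^2|H|^2)$ using the abscissa/ordinate multiplicity, exactly as in the paper. Your explicit $\sum_a N_a^2\leqslant M|H|$ computation is just a slightly more detailed version of the paper's ``trivially at most $M^2|H|^2$'' step, and the absorption of the residual $|H|^4$ term is handled the same way.
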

This was all that remained to complete the proof of estimate \eqref{e:sigma2} as well as the $\F_p$ claims of Theorem \ref{thm:T3}.

	\section{Acknowledgements} We thank Ilya Shkredov for encouragement and interest in the approach in this paper. The First Author has been partially supported by the Leverhulme Trust Grant RPG-2017-371.

	\bibliographystyle{plain}
\bibliography{bibliography}

\end{document}